\theoremstyle{plain}
\newtheorem{theorem}{Theorem}[section]
\newtheorem{lemma}[theorem]{Lemma}
\newtheorem{prop}[theorem]{Proposition}
\newtheorem{cor}[theorem]{Corollary}
\newtheorem{rem}[theorem]{Remark}
\title{On the convergence of the Sasaki $J$--flow}
\author[M. Zedda]{Michela Zedda}
\date{\today}
\subjclass[2010]{53C25; 53C44}
\keywords{Sasakian manifolds, geometric flows, critical metrics}
\address{Dipartimento di Matematica e Informatica\\ Universit\`a di Parma} 
 \email{michela.zedda@gmail.com}
\thanks{This work was supported by the project FIRB ``Geometria differenziale e teoria geometrica delle funzioni'' and by G.N.S.A.G.A. of I.N.d.A.M} 
\begin{document}
\maketitle
\begin{abstract}
This paper investigates the $C^\infty$-convergence of the Sasaki $J$-flow. The result is applied to prove a lower bound for the $K$-energy map in the Sasakian context.
\end{abstract}
\section{Introduction and statement of the main results}
The Sasaki $J$-flow has been introduced in \cite{VZ} as a natural counterpart of the K\"ahler $J$-flow in the Sasakian setting.
In \cite[pages 10-11]{donaldsonMM} Donaldson introduced the K\"ahler $J$-flow on a $n$-dimensional K\"ahler manifold $M$ pointing out the importance of its critical points from the point of view of moment maps. Given two K\"ahler forms $\omega$ and $\chi$ on $M$, such critical points satisfy the equation $n\chi\wedge\omega^{n-1}=c\,\omega^n$, where $c$ is a constant depending on $[\omega]$ and $[\chi]$. Donaldson observed that a necessary condition for their existence is that $[c\,\omega-\chi]$ be a K\"ahler class, and he asked if it was also sufficient. In \cite{chenM} X.X. Chen proved that for complex surfaces it is. Although, it is not in higher dimension, indeed in the recent paper \cite{LSS}, M. Lejmi and G. Sz\'ekelyhidi found a counterexpample on the blow up of $\mathds{P}^3$ at one point. The existence of critical points has been studied by B. Weinkove, V. Tosatti and J. Song in \cite{songweinkoveJ, tosatti} in terms of the positivity of some $(n-1,n-1)$ form and by X.X. Chen in \cite{chen} in terms of the sign of the holomorphic sectional curvature of $\chi$. In particular, in \cite{chen} Chen proved the long time existence of the flow and, when the bisectional curvature of $\chi$ is nonpositive, its convergence to a critical metric.

This work was inspired by \cite{wein1}, where B. Weinkove deals 
with the natural question on what the behaviour of the flow is on K\"ahler surfaces, where the existence of a critical metric is always guaranteed (once the necessary condition above is satisfied). He proved the convergence of the $J$--flow on K\"ahler surfaces under the only assumption $c\,\omega-\chi$ to be positive.
In the later work \cite{wein2}, Weinkove generalize his result to higher dimension, proving the convergence of the K\"ahler $J$-flow under the assumption of the positivity of the form $c\,\omega-(n-1)\chi$.

In the Sasakian case the situation is quite similar and a necessary condition for the existence of a critical point is that there exists a basic map $h$ such that $\frac c2\,(d\eta+dd^ch)-\chi$ is a transverse K\"ahler form (see \cite{VZ} or next section below for definitions and details). In \cite[Prop. 3.3]{VZ}, it is proven that that condition is also sufficient on $5$-dimensional Sasakian manifolds.

The first result of this paper is the following theorem, which translates Weinkove's result in \cite{wein2} to the Sasakian context (we remaind to Section \ref{def} for definitions and details):
\begin{theorem}\label{main}
Let  $(M,\xi,\Phi,\eta,g)$ be a compact $2n+1$-dimensional Sasaki manifold. Assume that $\frac c2\,d\eta-(n-1)\chi$ is a transverse K\"ahler form. Then, the Sasaki $J$--flow converges $C^\infty$ to a smooth critical metric. 
\end{theorem}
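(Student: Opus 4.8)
The plan is to transplant Weinkove's parabolic a priori estimates from \cite{wein2} to the transverse K\"ahler geometry of the Reeb foliation. Writing the evolving transverse K\"ahler form as $\omega_\varphi^T=\tfrac12\bigl(d\eta+dd^c\varphi\bigr)$ for a time-dependent basic potential $\varphi=\varphi_t$, the Sasaki $J$--flow becomes $\partial_t\varphi=c-\Lambda_{\omega_\varphi^T}\chi$, and a critical metric is exactly a basic $\varphi$ for which the right-hand side vanishes. Long-time existence of the flow is standard for such parabolic equations (cf. \cite{VZ}), so the problem reduces to proving uniform-in-$t$ a priori bounds for $\varphi$ in every basic $C^k$ norm together with decay of the velocity $\partial_t\varphi$. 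Since $\partial_t\varphi$ itself satisfies a linear parabolic equation obtained by differentiating the flow, the parabolic maximum principle applied to basic functions gives immediately $\|\partial_t\varphi\|_{C^0}\le C$, uniformly in $t$; equivalently $\Lambda_{\omega_\varphi^T}\chi$ is bounded, which is a \emph{lower} bound for the eigenvalues of $\omega_\varphi^T$ relative to $\chi$. Throughout, all analysis is performed on basic tensors, so that the transverse K\"ahler identities, the basic Laplacian and transverse Hodge theory replace their K\"ahler counterparts; the key structural fact is that $\partial,\bar\partial$ restricted to basic forms form an elliptic complex transverse to $\xi$, so the usual elliptic and parabolic machinery is available.

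Next I would establish the $C^0$ estimate for the normalized potential. Combining the velocity bound with the monotonicity of the $J$--type energy functional whose gradient flow is the Sasaki $J$--flow, and using the positivity hypothesis to bound that functional from below, a transverse Moser iteration yields a uniform bound $\|\varphi\|_{C^0}\le C$ after subtracting a time-dependent constant.

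The heart of the matter, and the step I expect to be the main obstacle, is the transverse $C^2$ estimate, that is, an \emph{upper} bound for the eigenvalues of $\omega_\varphi^T$ relative to $\chi$. Following Weinkove I would apply the transverse parabolic maximum principle to a quantity of the form $\log\bigl(\Lambda_\chi\,\omega_\varphi^T\bigr)-A\varphi$ with $A$ a large constant. Differentiating along the flow produces, besides manifestly good negative terms, error terms built from the transverse curvature of $\chi$ and from commutators of basic covariant derivatives; the decisive point is that the hypothesis that $\tfrac{c}{2}\,d\eta-(n-1)\chi$ be transverse K\"ahler supplies exactly the positive quantity needed to dominate these errors and to bound $\Lambda_\chi\,\omega_\varphi^T=\sum_i\lambda_i$ from above. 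Together with the lower eigenvalue bound from the first step this gives two-sided control $c_1\le\lambda_i\le c_2$ and hence uniform parabolicity. The feature specific to the Sasakian setting is that this computation must be carried out for basic functions on the (possibly non-Hausdorff) transverse space: one has to verify that the Reeb field never enters the estimate and that the transverse curvature terms of $\chi$ are the sole source of error, which is where the bookkeeping demands care.

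Finally, once the flow is uniformly parabolic and the operator $\varphi\mapsto c-\Lambda_{\omega_\varphi^T}\chi$ is recognized to be concave in the basic complex Hessian of $\varphi$, the transverse version of the parabolic Evans--Krylov theorem provides a uniform basic $C^{2,\alpha}$ bound, and transverse Schauder bootstrapping upgrades this to uniform basic $C^\infty$ bounds; by Arzel\`a--Ascoli the flow is precompact in $C^\infty$. To deduce convergence I would exploit that the energy functional is monotone non-increasing and bounded below along the flow, so $\int_0^\infty\|\partial_t\varphi\|_{L^2}^2\,dt<\infty$. With the uniform estimates this forces $\partial_t\varphi\to0$, whence every $C^\infty$ subsequential limit is a smooth critical metric; a standard argument using the strict monotonicity of the functional away from the critical set then upgrades subsequential convergence to $C^\infty$ convergence of the whole flow to a single critical metric, completing the proof.
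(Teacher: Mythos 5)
There is a genuine gap, and it sits exactly at the step you dismiss most quickly: the $C^0$ estimate. You propose to bound $\|\varphi\|_{C^0}$ by ``using the positivity hypothesis to bound the $J$-type energy functional from below'' and then running a Moser iteration. But the lower boundedness of $J_\chi$ on $\mathcal H_0$ is not available a priori: in this paper (Prop.~\ref{jbound}) it is deduced \emph{from} the existence of a critical metric, i.e.\ from the very statement you are trying to prove, and no independent argument for it under the hypothesis $\frac c2 d\eta-(n-1)\chi>0$ is known at this stage. Your scheme is therefore circular. Moreover, your ordering of the estimates inverts the actual logical structure of Weinkove's method that the paper adapts: the second-order estimate comes \emph{first} and is not uniform --- Prop.~\ref{secondorder} gives only $\gamma_f\le Ce^{A(f-\inf f)}$, an exponential bound in terms of the (unknown) oscillation of $f$ --- and this exponential estimate is then an essential \emph{input} to the $C^0$ bound: it feeds the gradient estimate (Lemma~\ref{stimanabla1}), which powers the Moser iteration (Prop.~\ref{normac0}). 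The $C^0$ bound itself is obtained by contradiction: from $I(f)=0$ and the Green's function of $g_\chi$ one gets $0\le\sup_M f\le C_0-C_1\inf_M f$ (Prop.~\ref{estimates}); assuming $\inf_M f$ is not uniformly controlled, one normalizes $\psi_i=f(\cdot,t_i)-\sup_M f(\cdot,t_i)$, invokes a Sasakian version of Tian's exponential-integral estimate (Lemma~\ref{boundint}) together with the Moser iteration to reach the contradiction $\|e^{-B\psi_i}\|_{C^0}<1$ while $\sup_M\psi_i=0$. None of this machinery appears in your proposal, and without it (or a genuine substitute) your ``transverse Moser iteration yields $\|\varphi\|_{C^0}\le C$'' has no foundation.

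A secondary weakness is the final convergence step. Sequential $C^\infty$ compactness plus $\int_0^\infty\|\partial_t\varphi\|_{L^2}^2\,dt<\infty$ (which again presupposes the lower bound on the functional) gives only subsequential convergence to critical points; upgrading to convergence of the whole flow by ``strict monotonicity away from the critical set'' is not a standard argument --- it would require something like a \L{}ojasiewicz--Simon inequality, which you neither state nor prove. The paper instead observes that $\dot f$ satisfies the heat-type equation $\partial_t\dot f=-\tilde\Delta_f\dot f$, so that once all metrics $g_t$ are uniformly controlled one can run Cao's argument to obtain exponential decay $\sup_Mf-\inf_Mf\le C_0e^{-C_1t}$, from which $C^\infty$ convergence to a single critical metric follows as in Weinkove's work. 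Your first step (velocity bound giving the lower eigenvalue bound) and your Evans--Krylov/Schauder bootstrapping are fine and match the paper's use of \cite{VZ}; the missing idea is the contradiction scheme for the $C^0$ estimate, which is the actual heart of the proof.
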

The proof is based on the long time existence of the flow and the uniform lower bound on the second derivatives of a solution to the flow established in \cite{VZ}, and on the estimates developed in sections \ref{secord} and \ref{c0} of the present paper, which 
are obtained applying the maximum principle and a Moser iteration argument (see the proof of Prop \ref{normac0}).


It is worth pointing out that as immediate corollary we get the $C^\infty$ convergence of the flow to a critical metric on compact $5$-dimensional Sasaki manifolds under the assumption\linebreak  {$\frac c2d\eta-\chi>0$}. 

As application, we highlight the relation between the Sasaki $J$-flow and the Mabuchi $K$-energy, introduced in the Sasakian context by \cite{F} (see also \cite{Z}), proving a lower bound for the $K$-energy map under the existence of a critical metric (see Theorem \ref{second} at the end of the paper). \\


The paper contains three more sections. In the first one we summarize some basic facts about Sasakian geometry, recall the definition of Sasaki $J$-flow and set the notations. In the second one we develope second order estimates on a solution $f$ to the $J$-flow which depends on $f$ itself. In the third section we study the $C^0$-estimates and prove Theorem \ref{main}. Finally in the fourth and last section we recall the defintion of Mabuchi $K$-energy in the Sasakian context and prove our second result Theorem \ref{second}.\\

The author is very grateful to Luigi Vezzoni for suggesting her to study critical points of the $J$--flow in the Sasakian context, for his advices and for all the interesting discussions along the preparation of this work.

\section{Sasakian manifolds and the Sasaki $J$-flow}\label{def}
Here we briefly recall what we need about Sasakian manifolds, the reader is referred to \cite{bgbook, sparks} for a more detailed exposition.

A Riemannian manifold $(M,g)$ is Sasakian if and only if the Riemannian cone $(M\times \mathds{R}^+,\bar g=r^2g+dr^2)$ is K\"ahler. The integrable complex structure $J$ and the K\"ahler form $\bar \omega$ on $(M\times \mathds{R}^+,\bar g)$ induce in a natural way on $(M,g)$:
\begin{enumerate}
\item a killing vector field $\xi$ and its dual $1$-form $\eta$, $\eta(\xi)=1$, $\iota_\xi d\eta=0$, which is a contact form, i.e. $\eta\wedge (d\eta)^n\neq 0$. The tangent bundle $TM$ splits into $TM=D\oplus L_\xi$, where $D=\ker \eta$ and $L_\xi$ is the line tangent to $\xi$.
\item an endomorphism $\Phi$ defined by $\Phi|_D=J|_D$, $\Phi_{L_\xi}=0$, which satisfies $\Phi^2=-{\rm Id}+\eta\otimes\xi$.
\end{enumerate}
The triple $(\eta,\xi,\Phi)$ realises a contact structure on $M$, while $(D,\Phi|_D)$ realises a CR structure.

According to $TM=D\oplus L_\xi$, the metric $g$ splits into:
$$
g(X,Y)=g^T(X,Y)+\eta(X)\eta(Y), \quad X, Y\in TM,
$$
where $g^T(X,Y)=\frac12 d\eta(X,\Phi Y)$, which is zero along the direction of $\xi$ and it is K\"ahler with respect to $D$, is called the {\em transverse K\"ahler metric} of $M$.

A $p$-form $\alpha$ on $M$ is basic if it satisfies:
$$
\iota_\xi\alpha=0,\quad \iota_\xi d\alpha=0.
$$
In particular, a function is basic if and only if its derivatives in the direction of $\xi$ vanishes. We denote the space of smooth basic functions on $M$ by $C_B^\infty (M,\mathds{R})$.

Given a Sasakian manifold $(M,g,\xi,\Phi,\eta)$, consider
$$
\mathcal{H}=\{ f\in C^\infty_B(M,\mathds{R})|\, \eta_f=\eta+d^cf\, \textrm{is a contact form}\},
$$
where $(d^cf)(X)=-\frac12df(\Phi(X))$ for any vector field $X$ on $M$. Observe that any $f\in \mathcal H$ induces a Sasakian structure $(\xi, \Phi_f, \eta_f)$ on $M$ with the same Reeb vector field $\xi$. The geometry of $\mathcal H$ has been studied by P. Guan and X. Zhang in \cite{guanzhang, guanzhangA} from  the point of view of geodesics, by W. He in \cite{he} from the point of view of curvature and by S. Calamai, D. Petrecca and K. Zheng in \cite{cpz} in relation to the Ebin metric.
  
In order to define the $J$-flow, we need to fix a transverse K\"ahler form $\chi$ on $M$, i.e. $\chi$ is a basic $(1,1)$-form which is positive and closed.
Let $f(t)$ be a smooth path on $\mathcal H$. Define the functional $J_\chi\!:\mathcal H\rightarrow \mathds{R}$ by:
 $$
 (\partial_t J_\chi)(f)=\frac{1}{2^{n-1}(n-1)!}\int_M\dot f\,\chi\wedge\eta\wedge (d\eta_f)^{n-1}=\frac{1}{2^{n}n!}\int_M\dot f\,\sigma_f\,\eta\wedge (d\eta_f)^{n},\quad J_\chi(0)=0,
$$
where $\sigma_f$ is the trace of $\chi$ with respect to $d\eta_f$. Alternatively, the $J_\chi$ functional can be defined by (cfr. \cite[Def. 2.2]{VZ}):
\begin{equation}\label{altj}
J_{\chi}(h)=\frac{1}{2^{n-1}(n-1)!} A_{\chi}(0,h)\,\nonumber
\end{equation}
where
$$
A_{\chi}(f):=\int_0^1\int_M \dot f\,\chi\wedge \eta\wedge (d\eta_f)^{n-1}\,dt.
$$
Further, let 
$\mathcal H_0=\{h\in \mathcal H|\ I(h)=0\}$, where $I\!:\mathcal H\rightarrow \mathds{R}$ is defined by:
$$
(\partial_tI)(f)=\int_M\dot f\eta\wedge (d\eta_f)^n,\quad I(0)=0.
$$
Notice that $I$ can be explicitely written by (see \cite[Eq. 14]{guanzhang}):
\begin{equation}\label{h0}
I(f)=\sum_{p=0}^n\frac{n!}{(p+1)!(n-p)!}\int_Mf\,\eta\wedge(d\eta_0)^{n-p}\wedge (i\partial_B\bar\partial_Bf)^p.
\end{equation}
Observe that $h\in \mathcal H_0$ is a critical point of $J_{\chi}$ restricted to $\mathcal{H}_0$ if and only if 
$$
\int_M k\,\eta\wedge \chi\wedge (d\eta_h)^{n-1}=0,
$$
for  every $k\in T_h\mathcal{H}_0$, i.e. if and only if $2n \,\eta\wedge \chi\wedge (d\eta_h)^{n-1}=c\, \eta\wedge (d\eta_h)^{n}$, where 
$$
c=\frac{2n\int_M \chi \wedge \eta\wedge (d\eta)^{n-1}}{\int_M \eta\wedge (d\eta)^n}\,. 
$$
In particular, $h\in\mathcal{H}_0$ is a critical point  of $J_{\chi}$ iff $\sigma_h=c$. The Sasaki $J$-flow is the gradient flow of $J_{\chi}\colon \mathcal{H}_0\to \mathds{R}$ and its evolution equation is given by:
\begin{equation}\label{jflow}
\dot f=c-\sigma_f\,,\quad f(0)=0.
\end{equation}

In the joint work with L. Vezzoni \cite{VZ}, we study the long time existence of the $J$-flow \eqref{jflow} and prove its convergence to a critical metric under an additional hypothesis on the sign of the transverse holomorphic sectional curvature of $\chi$. In the recent paper \cite{BHV}, the Sasaki $J$-flow is included as particular case in a more general result that prove the short time existence of second order geometric flows on foliated manifolds.

\medskip
We conclude this section recalling the definition of {\em special foliated coordinates} (see \cite[Subsec. 2.1]{VZ}) and setting some notations.

Let $(M,\xi,\Phi_f,\eta_f,g_f)$ be a Sasakian manifold as above and let $\chi$ be a second transverse K\"ahler form on $M$. Around each point $(x_0,t_0)\in M\times [0,+\infty)$ we can find {\em special foliated coordinates}
$\{z^1,\dots,z^{n},z\}$ for $\chi$, taking values in $\mathds{C}^n\times \mathds{R}$, such that 
\begin{equation}\label{coord}
\xi=\partial_z\,,\quad 
\Phi(d{z^j})=i\,d{z^j},\quad \Phi(d{\bar z^j})=-i\,d{\bar z^j}\,,
\end{equation}
and if we denote $\chi=\chi_{i\bar j}\,dz^i\wedge d\bar z^j$,  then 
$$
\chi_{i\bar j}=\delta_{ij}\,,\quad \partial_{z^r}\chi_{i\bar j}=0\,,\mbox{ at }(x_0,t_0)\,,
$$
and 
$$
(g_f)_{j\bar k}=\lambda_j\delta_{jk}\,,\mbox{ at }(x_0,t_0)\,.
$$
Here we denote: 
 $$
g_f=(g_f)_{i\bar j}dz^id\bar z^{j}+\eta_f^2\,,\quad d\eta_f=2i(g_f)_{i\bar j}dz^i\wedge d\bar z^{j},
$$
and in particular the transverse K\"ahler metric $g^T_f$ reads locally $g_f^T=(g_f)_{i\bar j}dz^id\bar z^{ j}$. 


Observe that $(g_f)_{i\bar j}$ are basic functions and by \eqref{coord}, in these coordinates a function is basic iff it does not depend on $z$. Let us also use upper indexes to denote the entries of a matrix' inverse. Further, for any $h\in C^\infty_B(M)$ we denote $f_{,j}=\partial_j f=\frac{\partial}{\partial z_j}f$, $f_{,\bar j}=\partial_{\bar j} f=\frac{\partial}{\partial \bar z_j}f$, $f_{,j\bar k}=\partial_{j}\partial_{\bar k} f=\frac{\partial^2}{\partial z_j\partial\bar z_k}f$.
In the sequel we will also denote by $\gamma_f$ the trace of $g_f$ with respect to $\chi$, i.e. locally $\gamma_f=\chi^{\bar j k}(g_f)_{j\bar k}$, and by $R(\chi)_{j\bar k l\bar m}$ the curvature tensor associated to $\chi$, which in special coordinates for $\chi$ at $(x_0,t_0)$ reads $R(\chi)_{j\bar k l\bar m}=-\chi_{j\bar k, l\bar m}$. Consequently we will have:
$$
R(\chi)^{a\bar b}_{\ \ l\bar m}=-\chi^{\bar a j}\chi^{\bar k b}\chi_{j\bar k, l\bar m},\quad {\rm Ric}(\chi)_{l\bar m}=-\chi^{\bar k j}\chi_{j\bar k, l\bar m}.
$$
Further, we denote by $(\cdot,\cdot)_\chi$ the product on basic forms $\alpha$, $\beta\in \Omega_B^{(p,q)}(M,\mathds{C})$:
$$
(\alpha,\beta)_\chi=\frac{1}{2^nn!}\int_M \langle \alpha,\beta\rangle_\chi\,\eta\wedge \chi^n\,. 
$$
where
$$
\langle \alpha,\beta\rangle_\chi=\alpha_{i_1\dots i_p\bar j_1\dots\bar j_q}\cdot \bar \beta_{r_1\dots r_p\bar s_1\dots\bar s_q} 
\chi^{\bar r_1i_1}\cdots \chi^{\bar r_pi_p}\cdot \chi^{\bar j_1 s_1}\cdots \chi^{\bar j_q s_q}.
$$

Finally, let $\tilde\Delta_f$ be the operator depending on a smooth curve $f$ in $\mathcal H$ and acting on basic smooth functions, defined by:
$$
\tilde \Delta_f(h)=g_f^{\bar k p}g_f^{\bar q j}\chi_{j\bar k} h_{,a\bar b}\,. 
$$
\begin{rem}\label{delta}\rm
Observe that the operator $\tilde \Delta_f$ satisfies the following property. If $h(x,t)$ is a smooth path in $\mathcal H$ and $(x_0,t_0)$ is a global maximum for $h$ in $M\times [0,t]$, then by the maximum principle at $(x_0,t_0)$ one has:
$$
(\partial_t-\tilde \Delta_f)h\geq 0.
$$
More precisely, at $(x_0,t_0)$ one has $\partial_th\geq 0$ and $dd^ch\leq 0$. Then, by the definition of $\tilde \Delta_f$, one get $\tilde \Delta_f(h)\leq 0$ and thus $\partial_t-\tilde \Delta_f\geq 0$.
\end{rem}
\begin{rem}\label{lambda}\rm
Observe that in special foliated coordinates for $\chi$ around $(x_0,t_0)\in M\times[0,+\infty)$, such that $g_f$ takes a diagonal expression with eigenvalues $\lambda_1,\dots, \lambda_n$, one has:
$$
\det(\chi)=1,\quad\det(g_f^T)=\lambda_1\cdots\lambda_n,\quad \sigma_f=g_f^{\bar j k}\chi_{j\bar k}=\sum_{j=1}^n\frac1{\lambda_j},\quad \gamma_f=\chi^{\bar j k}(g_f)_{j\bar k}=\sum_{j=1}^n\lambda_j,
$$
$$
 g_f^{\bar qp}g_f^{\bar ps}(g_f)_{s\bar q}=\sum_{j=1}^n\frac1{\lambda_j}=\sigma_f,\quad
g_f^{\bar qp}g_f^{\bar ps}\chi_{s\bar q}=\sum_{j=1}^n\frac1{\lambda_j^2}.
$$
\end{rem}

\section{Second order estimates}\label{secord}



In order to develope the second order estimates we begin with the following lemma:

\begin{lemma}\label{log}
Let $(M,g,\xi,\Phi,\eta)$ be a $(2n+1)$--dimensional Sasakian manifold and let $f$ be a solution to \eqref{jflow} in $[0,+\infty)$. Then at any point $(x_0,t_0)\in M\times [0,+\infty)$:
$$
(\tilde \Delta_f-\partial_t)(\log \gamma_f)\geq\frac1{\gamma_f}g_f^{\bar qp}g_f^{\bar ps}R(\chi)^{\bar ba}_{\ \ s\bar q}(g_f)_{a\bar b}- \frac{1}{\gamma_f}g_f^{\bar kj}{\rm Ric}(\chi)_{j\bar k}.
$$
\end{lemma}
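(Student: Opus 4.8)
The plan is to compute the evolution of $\log\gamma_f$ directly in special foliated coordinates for $\chi$ at the point $(x_0,t_0)$, where everything diagonalizes. The key idea is that $\gamma_f = \chi^{\bar j k}(g_f)_{j\bar k}$ is the transverse trace of the evolving metric, and along the flow the quantity $(g_f)_{j\bar k}$ evolves essentially like $\partial_t(g_f)_{j\bar k} = \partial_j\partial_{\bar k}\dot f = -\partial_j\partial_{\bar k}\sigma_f$, using the flow equation \eqref{jflow} and the identity $d\eta_f = 2i(g_f)_{i\bar j}dz^i\wedge d\bar z^j$. So the first step is to write out $(\tilde\Delta_f-\partial_t)\gamma_f$ and then use $(\tilde\Delta_f-\partial_t)\log\gamma_f = \tfrac1{\gamma_f}(\tilde\Delta_f-\partial_t)\gamma_f - \tfrac1{\gamma_f^2}\tilde\Delta_f(\gamma_f)\cdot(\text{gradient terms})$, i.e. the logarithm produces the correction term $+\tfrac1{\gamma_f^2}g_f^{\bar k p}g_f^{\bar q j}\chi_{j\bar k}\,(\gamma_f)_{,a\bar b}$-type contributions that I expect to discard for a lower bound.

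Concretely, I would first expand $\partial_t\gamma_f = \chi^{\bar j k}\partial_t(g_f)_{j\bar k}$ and $\tilde\Delta_f(\gamma_f) = g_f^{\bar k p}g_f^{\bar q j}\chi_{j\bar k}\,(\gamma_f)_{,a\bar b}$, where I must commute the operator $\tilde\Delta_f$ past the contraction defining $\gamma_f$. The commutation picks up the fourth-order derivatives of $\chi$, which is exactly where the curvature tensor $R(\chi)_{j\bar k l\bar m} = -\chi_{j\bar k,l\bar m}$ enters. The comparison of the two terms should produce the expression $\tfrac1{\gamma_f}g_f^{\bar q p}g_f^{\bar p s}R(\chi)^{\bar b a}_{\ \ s\bar q}(g_f)_{a\bar b}$ for the positive bisectional-curvature contribution and $-\tfrac1{\gamma_f}g_f^{\bar k j}\mathrm{Ric}(\chi)_{j\bar k}$ for the Ricci contribution, matching the claimed inequality.

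The heart of the matter is a Cauchy--Schwarz / Schwarz-type argument: after expansion, the terms involving third derivatives of $f$ (equivalently, first derivatives of $(g_f)_{j\bar k}$) will assemble into a manifestly nonnegative quantity once the logarithm's correction term is included. Specifically, in the diagonal frame, using Remark \ref{lambda}, the third-order terms take the form of a sum of squares weighted by the $\lambda_j$, and the logarithm correction $-\tfrac1{\gamma_f^2}|\partial\gamma_f|^2$ is dominated by the full $-\tfrac1{\gamma_f^2}(\text{diagonal gradient terms})$ precisely by Cauchy--Schwarz. Dropping this nonnegative remainder yields the asserted inequality. I would carry out the curvature bookkeeping in the diagonal frame so that $\chi_{i\bar j}=\delta_{ij}$, $\partial_{z^r}\chi_{i\bar j}=0$ and $(g_f)_{j\bar k}=\lambda_j\delta_{jk}$ simplify the contractions.

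\emph{The main obstacle} I expect is the commutation step: interchanging the second-order operator $\tilde\Delta_f$ with the trace contraction against $\chi$ generates several terms, and one must keep careful track of which fourth-order $\chi$-derivatives reconstitute $R(\chi)$ versus $\mathrm{Ric}(\chi)$, and verify that all the genuinely third-order (in $f$) terms are nonnegative after adding back the logarithmic correction. The sign discipline here — ensuring that exactly the gradient square terms cancel and that the curvature terms land with the stated signs — is the delicate part; the use of special foliated coordinates where $\partial_{z^r}\chi_{i\bar j}=0$ at the point is essential to kill the spurious first-derivative-of-$\chi$ contributions.
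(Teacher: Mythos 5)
Your overall plan is the paper's own proof (which is Weinkove's argument transplanted to the Sasakian setting): expand $\partial_t\log\gamma_f$ through the flow equation \eqref{jflow}, expand $\tilde\Delta_f(\log\gamma_f)$, work at $(x_0,t_0)$ in special foliated coordinates for $\chi$ so that the first derivatives of $\chi$ vanish and the fourth-order terms in $f$ cancel between the two expansions, and identify the two curvature contractions exactly as you describe. The one structural difference is at what you rightly call the heart of the matter: the paper does not prove the gradient-term inequality at all, it quotes it as \cite[Lemma 3.2]{wein1}, whereas you propose to establish it on the spot by Cauchy--Schwarz in the diagonal frame.

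On that step your sketch is missing the ingredient that actually makes the Cauchy--Schwarz close, and it is worth naming because the ``plain'' version fails. In the diagonal frame the logarithmic correction you must absorb is
$$
\frac{1}{\gamma_f^2}\sum_p\frac{1}{\lambda_p^2}\Bigl|\sum_a(g_f)_{a\bar a,\bar p}\Bigr|^2,
$$
while the positive third-order terms produced by $-\partial_t\log\gamma_f$ carry weights $\frac{1}{\lambda_s^2\lambda_p}$ and $\frac{1}{\lambda_p^2\lambda_s}$ on $|(g_f)_{s\bar p,\bar a}|^2$. If you apply weighted Cauchy--Schwarz directly to $\sum_a(g_f)_{a\bar a,\bar p}$, you need positive terms with weight $\frac{1}{\lambda_p^2\lambda_a}$ in front of $|(g_f)_{a\bar a,\bar p}|^2$; but, reading indices literally, the quantity $|(g_f)_{a\bar a,\bar p}|^2$ only appears among the positive terms with weight comparable to $\frac{1}{\lambda_a^3}$, which does \emph{not} dominate $\frac{1}{\lambda_p^2\lambda_a}$ when $\lambda_a\gg\lambda_p$ (and for a general, non-symmetric array $h_{s\bar p\bar a}$ the inequality is simply false). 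The fix is the symmetry $(g_f)_{a\bar a,\bar p}=(g_f)_{a\bar p,\bar a}$, which holds because $d\eta_f$ is closed, i.e.\ $g_f^T$ is transversally K\"ahler: first swap the barred indices, then apply Cauchy--Schwarz with weights $\lambda_a$, which lands exactly on the available terms $\frac{1}{\lambda_p^2\lambda_a}|(g_f)_{a\bar p,\bar a}|^2$. So either make this use of closedness explicit in your argument, or cite \cite[Lemma 3.2]{wein1} as the paper does. (A cosmetic point: the curvature enters through the second derivatives $\chi_{j\bar k,l\bar m}$ of the components of $\chi$, not ``fourth-order derivatives of $\chi$''; the formula $R(\chi)_{j\bar kl\bar m}=-\chi_{j\bar k,l\bar m}$ you quote is the right one.)
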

\begin{proof}
Compute:
$$
\partial_t\log(\gamma_f)=\frac{1}{\gamma_f}\chi^{\bar j k}\partial_t\left[(g_f)_{k\bar j}\right]=\frac{1}{\gamma_f}\chi^{\bar j k}\dot f_{,k\bar j},
$$
where:
\begin{equation}\label{dotfab}
\begin{split}
\dot f_{,a\bar b}=&-2g_f^{\bar k s}(g_f)_{\bar rs,\bar b}g_f^{\bar r p}(g_f)_{p\bar q,a}g_f^{\bar q j}\chi_{j\bar k}+g_f^{\bar qp}g_f^{\bar rs}\chi_{p\bar r}(g_f)_{a\bar b,s\bar q}\\
&+g_f^{\bar k p}(g_f)_{p\bar q,a}g_f^{\bar q j}\chi_{j\bar k,\bar b}+g_f^{\bar k s}(g_f)_{\bar rs,\bar b}g_f^{\bar r j}\chi_{j\bar k,a}-g_f^{\bar k j}\chi_{j\bar k,a\bar b}.
\end{split}\nonumber
\end{equation}
Thus:
\begin{equation}
\begin{split}
\partial_t\log(\gamma_f)=&\frac{1}{\gamma_f}\chi^{\bar b a}\left(-2g_f^{\bar k s}(g_f)_{\bar rs,\bar b}g_f^{\bar r p}(g_f)_{p\bar q,a}g_f^{\bar q j}\chi_{j\bar k}+g_f^{\bar qp}g_f^{\bar rs}\chi_{p\bar r}(g_f)_{a\bar b,s\bar q}\right.\\
&\left.+g_f^{\bar k p}(g_f)_{p\bar q,a}g_f^{\bar q j}\chi_{j\bar k,\bar b}+g_f^{\bar k s}(g_f)_{\bar rs,\bar b}g_f^{\bar r j}\chi_{j\bar k,a}-g_f^{\bar k j}\chi_{j\bar k,a\bar b}\right).
\end{split}\nonumber
\end{equation}
Taking special coordinates around $(x_0,t_0)$ we get:
$$
\partial_t\log(\gamma_f)=\frac{1}{\gamma_f}\chi^{\bar b a}\left(-2g_f^{\bar k s}(g_f)_{\bar rs,\bar b}g_f^{\bar r p}(g_f)_{p\bar q,a}g_f^{\bar q j}\chi_{j\bar k}+g_f^{\bar qp}g_f^{\bar rs}\chi_{p\bar r}(g_f)_{a\bar b,s\bar q}\right)+\frac1{\gamma_f}g_f^{\bar k j}{\rm Ric}(\chi)_{j\bar k}.
$$
Further:
$$
\tilde \Delta_f[\log\gamma_f]=g_f^{\bar qp}g_f^{\bar rs}\chi_{p\bar r}(\log\gamma_f)_{,s\bar q}=g_f^{\bar qp}g_f^{\bar rs}\chi_{p\bar r}(-\gamma_f^{-2}(\gamma_f)_{,\bar q}(\gamma_f)_{,s}+\gamma_f^{-1}(\gamma_f)_{,s\bar q}),
$$
where at $(x_0,t_0)$:
$$
(\gamma_f)_{,q}=-\chi^{\bar b r}\chi_{\bar rs,q}\chi^{\bar s a}(g_f)_{a\bar b}+\chi^{\bar ba}(g_f)_{a\bar b,q}=\chi^{\bar ba}(g_f)_{a\bar b,q},
$$
$$
(\gamma_f)_{,s\bar q}=-\chi^{\bar b r}\chi_{\bar rj,s\bar q}\chi^{\bar j a}(g_f)_{a\bar b}+\chi^{\bar ba}(g_f)_{a\bar b,s\bar q}.
$$
Thus:
$$
\tilde \Delta_f[\log\gamma_f]=-\frac1{\gamma_f^{2}}g_f^{\bar qp}g_f^{\bar ps}(g_f)_{a\bar a,\bar q}(g_f)_{b\bar b,s}+\frac1{\gamma_f}g_f^{\bar qp}g_f^{\bar ps}R(\chi)^{\bar ba}_{\ \ s\bar q}(g_f)_{a\bar b}+\frac1{\gamma_f}g_f^{\bar qp}g_f^{\bar ps}\chi^{\bar ba}(g_f)_{a\bar b,s\bar q},
$$
which implies:
\begin{equation}\label{estim1}
\begin{split}
(\tilde \Delta_f-\partial_t)(\log \gamma_f)&\geq\frac1{\gamma_f}g_f^{\bar qp}g_f^{\bar ps}R(\chi)^{\bar ba}_{\ \ s\bar q}(g_f)_{a\bar b}- \frac{1}{\gamma_f}g_f^{\bar kj}{\rm Ric}(\chi)_{j\bar k},
\end{split}
\end{equation}
where we used that by \cite[Lemma 3.2]{wein1}, one has:
$$
\gamma_f\chi^{b\bar a}g_f^{\bar k s}g_f^{\bar r p}g_f^{\bar q j}\chi_{j\bar k}(g_f)_{\bar rs,\bar b}(g_f)_{p\bar q,a}\geq\chi^{\bar ba}\chi^{\bar kj} g_f^{\bar qp}g_f^{\bar rs}\chi_{p\bar r}(g_f)_{a\bar b,\bar q}(g_f)_{j\bar k,s}.
$$
\end{proof}

Recall now that a uniform lower bound on the second derivatives of a solution $f$ to the Sasaki $J$-flow is obtained in \cite[Lemma 6.1]{VZ}.
%
In order to get a uniform upper bound, we start proving the following proposition, which follows essentially \cite[Th. 2.1]{wein2} (see also \cite[Th. 3.1]{wein1} for the case $n=2$).


%
\begin{prop}\label{secondorder}
Let $(M,g,\xi,\Phi,\eta)$ be a $2n+1$-dimensional Sasakian manifold and let $f$ be a solution to \eqref{jflow} in $[0,+\infty)$. Assume that $\frac{c}2d\eta-(n-1)\chi>0$. Then, for any $t\geq0$ there exist constants $A$ and $C$, depending only on the initial data, such that $\gamma_f\leq  Ce^{A(f-\inf_{M\times [t,0]} f)}$ in $[0,t]$.
\end{prop}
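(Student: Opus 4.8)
The plan is to apply a parabolic maximum principle to the quantity $G=\log\gamma_f-Af$ for a large constant $A>0$ to be chosen, following the scheme of \cite[Th.~2.1]{wein2}. Fix $t>0$ and let $(x_0,t_0)$ be a maximum point of $G$ on $M\times[0,t]$. If $t_0=0$ then $f(\cdot,0)=0$, so $G(x_0,0)=\log\gamma_f(x_0,0)$ is bounded by a constant depending only on the initial data; since $\inf_{M\times[0,t]}f\le 0$, this already gives the desired estimate. The substantive case is $t_0>0$, where Remark \ref{delta} yields $(\tilde\Delta_f-\partial_t)G\le 0$ at $(x_0,t_0)$.

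At this point I would pass to special foliated coordinates for $\chi$, so that $\chi_{i\bar j}=\delta_{ij}$ and $g_f=\mathrm{diag}(\lambda_1,\dots,\lambda_n)$, and combine three ingredients. First, Lemma \ref{log} bounds $(\tilde\Delta_f-\partial_t)\log\gamma_f$ below by the two curvature terms; as $R(\chi)$ and $\mathrm{Ric}(\chi)$ are fixed tensors on the compact $M$, these are controlled by $-K\sum_j\lambda_j^{-2}$ and, using the lower bound on the $\lambda_j$ from \cite[Lemma~6.1]{VZ}, by a constant $-C_1$. Second, the relation between $f_{,i\bar j}$ and $(g_f)_{i\bar j}-(g_0)_{i\bar j}$ fixed in Section \ref{def}, together with Remark \ref{lambda}, expresses $\tilde\Delta_f f$ as $\kappa\sigma_f-\kappa\sum_j(g_0)_{j\bar j}\lambda_j^{-2}$ for the appropriate constant $\kappa$, the last sum being the crucial good term. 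Third, the flow \eqref{jflow} rewrites $A\partial_t f$ as $A(c-\sigma_f)$.

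Assembling these at $(x_0,t_0)$ and recalling $\sigma_f=\sum_j\lambda_j^{-1}$, the inequality $(\tilde\Delta_f-\partial_t)G\le 0$ takes the shape
$$0\ \ge\ \sum_{j=1}^{n}\left(\frac{\kappa A\,(g_0)_{j\bar j}-K}{\lambda_j^2}-\frac{(\kappa+1)A}{\lambda_j}\right)+Ac-C_1.$$
Here the hypothesis enters exactly once: positivity of $\tfrac c2\,d\eta-(n-1)\chi$ furnishes a lower bound $(g_0)_{j\bar j}\ge\beta$ with $\beta$ a positive multiple of $(n-1)/c$. Choosing $A$ so large that $\kappa A(g_0)_{j\bar j}-K>0$ for every $j$, I would estimate each summand by completing the square, \emph{except} at the index of the largest eigenvalue $\lambda_{\max}$, where I instead discard the nonnegative quadratic term and keep only $-(\kappa+1)A/\lambda_{\max}$. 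In this way the quadratic loss is incurred on the $n-1$ non-maximal eigenvalues only, and the factor $(n-1)$ supplied by $\beta$ is precisely what makes this total loss strictly less than $Ac$. What survives is $(\kappa+1)A/\lambda_{\max}\ge\theta A$ for some $\theta>0$, which bounds $\lambda_{\max}$, hence $\gamma_f(x_0,t_0)\le n\lambda_{\max}$, by a constant depending only on the initial data.

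Once $\gamma_f(x_0,t_0)\le C$, the maximum value satisfies $G\le\log C-Af(x_0,t_0)\le\log C-A\inf_{M\times[0,t]}f$, and propagating this bound over $M\times[0,t]$ gives $\gamma_f\le C\,e^{A(f-\inf f)}$. I expect the heart of the matter to be the third paragraph: one must check that the curvature term is absorbed by the good second-order term for $A$ large and---more delicately---that the coefficient $(n-1)$ in the hypothesis is tuned to dominate the completing-the-square loss over exactly the $n-1$ smaller eigenvalues. Tracking the precise constant $\kappa$ and the normalization hidden in $\tfrac c2\,d\eta$ is what ultimately forces the inequality to close.
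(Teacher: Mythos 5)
Your proposal is correct and follows essentially the same route as the paper's proof: the maximum principle of Remark \ref{delta} applied to $\log\gamma_f-Af$, Lemma \ref{log} for $(\tilde\Delta_f-\partial_t)\log\gamma_f$, the identity \eqref{estimf} combined with the flow equation \eqref{jflow}, and the strict inequality $\frac c2\,d\eta>(n-1)\chi$, which on a compact $M$ produces the margin (the paper's \eqref{epsilon}, your $\beta>(n-1)/c$) that closes the final quadratic inequality in the $1/\lambda_j$; note that your $\kappa$ is indeed $1$ in the paper's conventions, by \eqref{estimf}. The only deviations are harmless implementation choices: you bound the curvature terms of Lemma \ref{log} by a constant via the eigenvalue lower bound of \cite[Lemma 6.1]{VZ}, whereas the paper absorbs them into the fraction $\epsilon A\, g_f^{\bar qp}g_f^{\bar ps}(g_0)_{s\bar q}$ of the good term by taking $A$ large, and you complete the square only over the $n-1$ non-maximal eigenvalues (keeping $-2A/\lambda_{\max}$ to bound $\lambda_{\max}$), whereas the paper completes it over all $n$ indices and bounds every $\lambda_j$ at once.
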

\begin{proof}
Normalize $\chi$ in order to get $c=1$ (i.e. $\frac{1}2d\eta-(n-1)\chi>0$).
Fix $t> 0$ and let $(x_0,t_0)$ be a maximum in $M\times [0,t]$ for $\log \gamma_f-Af$, where $A$ is a constant to be fix later.
By Lemma \ref{log} above at $(x_0,t_0)$ we get:
$$
(\tilde \Delta_f-\partial_t)(\log \gamma_f)\geq\frac1{\gamma_f}g_f^{\bar qp}g_f^{\bar ps}R(\chi)^{\bar ba}_{\ \ s\bar q}(g_f)_{a\bar b}- \frac{1}{\gamma_f}g_f^{\bar kj}{\rm Ric}(\chi)_{j\bar k}.
$$
Further, we have:
\begin{equation}\label{estimf}
(\tilde \Delta_f-\partial_t)f=g_f^{\bar qp}g_f^{\bar rs}\chi_{p\bar r}f_{s\bar q}-\dot f=g_f^{\bar qp}g_f^{\bar ps}(g_f)_{s\bar q}-g_f^{\bar qp}g_f^{\bar ps}(g_0)_{s\bar q}-\dot f.
\end{equation}
Thus, by Remark \ref{lambda} and 
since with our normalization a solution $f$ to \eqref{jflow} satisfies $\dot f=1-\sigma_f$, it follows:
$$
(\tilde \Delta_f-\partial_t)(\log \gamma_f-Af)\geq  \frac1{\gamma_f}g_f^{\bar qp}g_f^{\bar ps}R(\chi)^{\bar ba}_{\ \ s\bar q}(g_f)_{a\bar b}- \frac{1}{\gamma_f}g_f^{\bar kj}{\rm Ric}(\chi)_{j\bar k}-2A\sigma_f+Ag_f^{\bar qp}g_f^{\bar ps}(g_0)_{s\bar q}+A.
$$
Let $C_0$ be a positive constant such that:
$$
 R(\chi)^{\bar b a}_{\ \ j\bar k}\geq -C_0\chi^{b\bar a}(g_0)_{s\bar q},
$$
From $\frac12d\eta-\chi>0$ it follows that  we can choose $\epsilon>0$ small enough to have:
\begin{equation}\label{epsilon}
\frac12d\eta\geq (n-1+(n+1)\epsilon)\chi,
\end{equation}
and we can set $A$ big enough such that:
$$
\epsilon Ag_f^{\bar qp}g_f^{\bar ps}(g_0)_{s\bar q}\geq -C_0g_f^{\bar qp}g_f^{\bar ps}(g_0)_{s\bar q}-\frac{1}{\gamma_f}g_f^{\bar kj}{\rm Ric}(\chi)_{j\bar k}.
$$
Thus:
$$
(\tilde \Delta_f-\partial_t)(\log \gamma_f-Af)\geq A\left((1-\epsilon)g_f^{\bar qp}g_f^{\bar ps}(g_0)_{s\bar q}-2\sigma_f+1\right).
$$
Since at $(x_0,t_0)$ it follows easily by \eqref{epsilon} that one has:
$$
(1-\epsilon)(g_0)_{s\bar q}\geq  (n-1+\epsilon)\chi_{s\bar q},
$$
by Remark \ref{lambda} we finally get:
$$
(\tilde \Delta-\partial_t)(\log \gamma_f-Af)\geq A\left((n-1+\epsilon)\sum_{j=1}^n\frac1{\lambda_j^2}-2\sum_{j=1}^n\frac1{\lambda_j}+1\right).
$$
At this point, observe that $(x_0,t_0)$ has been chosen to be a global maximum in $M\times [0,t]$ and thus (see Remark \ref{delta}):
$$
0\geq(\tilde \Delta-\partial_t)(\log \gamma_f-Af),
$$
which implies:
\begin{equation}
(n-1+\epsilon)\sum_{j=1}^n\frac1{\lambda_j^2}-2\sum_{j=1}^n\frac1{\lambda_j}+1\leq 0.\nonumber
\end{equation}
This last inequality implies an upper bound for all $\lambda_j$, as it follows considering that we can rewrite it as:
$$
\sum_{j=1}^n\left(\frac{1}{\sqrt{n-1+\epsilon}}-\frac{\sqrt{n-1+\epsilon}}{\lambda_j}\right)^2-\frac n{n-1+\epsilon}+1\leq 0,
$$
an thus for any $j=1,\dots, n$:
$$
\frac{1}{\sqrt{n-1+\epsilon}}-\frac{\sqrt{n-1+\epsilon}}{\lambda_j}\leq \frac {\sqrt {1-\epsilon}}{\sqrt{n-1+\epsilon}},
$$
i.e.:
$$
\lambda_j\leq\frac {{n-1+\epsilon}}{1-\sqrt {1-\epsilon}}.
$$
It follows that at $(x_0,t_0)$, $\gamma_f$ is bounded above. Since $(x_0,t_0)$ is the global maximum in $[0,t]$ for $\log\gamma_t-Af$, we get that
$$
\log\gamma_t-Af\leq \log C-A\inf_{M\times [t,0]} f,
$$ 
i.e.:
$$
\gamma_t\leq Ce^{A(f-\inf_{M\times [0,t]} f)},
$$
as wished.
\end{proof}

\section{$C^0$ estimates and the proof of Theorem \ref{main}}\label{c0}

In order to get a uniform upper bound for a solution to \eqref{jflow} we modify the arguments in \cite{wein1, wein2}. 
For this purpose, let $g_\chi$ be the Riemannian metric which has $\chi$ as transverse K\"ahler metric, i.e.:
$$
g_\chi(\cdot,\cdot)=\chi(\cdot,\Phi\cdot)+\eta(\cdot)\eta(\cdot).
$$
Observe that since $(M,g_\chi)$ is a compact Riemannian manifold, there exists a Green function $G(x,y)$ which satisfies for any $u\in C^\infty (M)$:
$$
u(x)=\int_M G(x,y)\Delta u(y)d\mu(y)+\frac{1}{\int_Md\mu}\int_Mu\,d\mu,
$$
where $d\mu$ and $\Delta$ are respectively the volume form and the Riemannian Laplacian associated to ${g_\chi}$.
By \cite[Prop. 2.8]{nitta} $\Delta_\chi\psi=-\Delta\psi$ for any $\psi\in C^\infty_B(M, \mathds{R})$, where $\Delta_\chi$ is the basic Laplacian associated to $\chi$, i.e. it is locally expressed by:
$$
\Delta_\chi\psi=\,\chi^{\bar jr}\psi_{,r\bar j}\,,\quad \mbox{ for }\psi\in C^{\infty}_B(M, \mathds{R})\,,
$$
(in our notation the basic Laplacian has the opposite sign of \cite{nitta} one).
Thus, for any $\psi\in C^\infty_B(M, \mathds{R})$ we have:
\begin{equation}\label{green}
\psi(x)=-\int_M G(x,y)\Delta_\chi\psi(y)d\mu+\frac{1}{\int_Md\mu}\int_M\psi d\mu.
\end{equation}
\begin{rem}\label{decreasing}\rm
Notice that $\Delta_\chi f$ is uniformly bounded from below, as it follows easily from the definition of $\Delta_\chi f$ and by observing that:
$$
\chi^{\bar j k}(g^T_f)_{j\bar k}=\chi^{\bar j k}((g^T)_{j\bar k}+f_{,j\bar k})>0.
$$
\end{rem}

\begin{prop}\label{estimates}
Let $(M,g,\xi,\Phi,\eta)$ be a $(2n+1)$-dimensional Sasakian manifold and let $f$ be a solution to \eqref{jflow}. Then there exist two positive constants $C_0$ and $C_1$, depending only on the initial data, such that:
$$0\leq \sup_M f\leq C_0-C_1\inf_Mf.$$ 
\end{prop}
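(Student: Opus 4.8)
The plan is to prove the two inequalities separately. The lower bound $0 \le \sup_M f$ is essentially free: the flow starts at $f(0)=0$, so $\sup_M f \ge f$ at any earlier time, but more directly one expects that the average of $f$ (with respect to a suitable measure) can be controlled, forcing $\sup_M f \ge 0$. Concretely, I would integrate the flow equation against $\eta \wedge (d\eta_f)^n$ and use the definition of $c$ together with the fact that $f$ lives in $\mathcal H_0$ (so $I(f)=0$) to show that the relevant average of $f$ vanishes or is nonnegative; since $f$ cannot be strictly negative everywhere if its average is zero, this yields $\sup_M f \ge 0$.

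For the substantive inequality $\sup_M f \le C_0 - C_1 \inf_M f$, I would apply the Green's representation formula \eqref{green} at a point where $f$ attains its maximum. Writing $x_{\max}$ for such a point,
$$
\sup_M f = f(x_{\max}) = -\int_M G(x_{\max},y)\,\Delta_\chi f(y)\,d\mu + \frac{1}{\int_M d\mu}\int_M f\,d\mu.
$$
The key structural input is Remark \ref{decreasing}: $\Delta_\chi f = \chi^{\bar j k}(g^T_f)_{j\bar k} - \chi^{\bar j k}(g^T)_{j\bar k} = \gamma_f - \gamma_0$ is uniformly bounded from below, say $\Delta_\chi f \ge -K$. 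The standard trick (as in the Kähler setting of \cite{wein1,wein2}) is that the Green function can be normalized so that $G(x,y) \ge 0$ after subtracting its infimum $\inf G =: -C_G$; then $G(x_{\max},y) - (-C_G) \ge 0$, and since $\int_M (\gamma_f - \gamma_0)\,d\mu$ has a fixed sign/value (it integrates to a cohomological constant independent of $t$), the term $\int_M (G + C_G)(\gamma_f - \gamma_0)\,d\mu$ is bounded above by a constant. This controls the first term by a constant depending only on the initial data.

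The remaining term is the average $\frac{1}{\int_M d\mu}\int_M f\,d\mu$, and here the link to $\inf_M f$ appears: I would bound the average above by $\sup_M f$ on one side and relate it to $\inf_M f$ on the other, but the cleaner route is to apply \eqref{green} instead at the \emph{minimum} point to get a matching lower bound for $\inf_M f$ in terms of the same average, and then eliminate the average between the two representations. Subtracting the two Green's formula identities (evaluated at $x_{\max}$ and $x_{\min}$) cancels the average term and leaves
$$
\sup_M f - \inf_M f \le \int_M \bigl(G(x_{\min},y) - G(x_{\max},y)\bigr)\,\Delta_\chi f(y)\,d\mu,
$$
which is bounded by a constant times one plus the quantities already controlled; rearranging gives $\sup_M f \le C_0 - C_1 \inf_M f$ after absorbing the average.

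The main obstacle I anticipate is the bookkeeping around the Green function: one must use both its uniform lower bound (to make $G+C_G \ge 0$) and the integrability/normalization of the term $\int_M G(x,\cdot)\,\Delta_\chi f$, together with the fact that $\int_M \Delta_\chi f\,d\mu = 0$, to ensure the sign-definite estimates go through with constants independent of $t$. The transverse nature of $\Delta_\chi$ versus the Riemannian $\Delta$ (reconciled by \cite[Prop. 2.8]{nitta} and the identity $\Delta_\chi \psi = -\Delta\psi$ for basic $\psi$) must be handled carefully so that the classical Green's formula applies to the basic function $f$; this compatibility is exactly what \eqref{green} encodes, so the delicate point is really combining the lower bound on $\Delta_\chi f$ with the controlled average rather than any single estimate in isolation.
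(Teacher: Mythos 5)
Your lower bound $\sup_M f\geq 0$ and your treatment of the Green term at the maximum point coincide with the paper's argument (modulo a sign slip: you need $\int_M(G+C_G)\Delta_\chi f\,d\mu$ bounded \emph{below}, not above --- it cannot be bounded above, since $\Delta_\chi f=\gamma_f-\gamma_0$ has no uniform upper bound). The genuine gap is the step where you eliminate the average $\bar f:=\frac{1}{\int_M d\mu}\int_M f\,d\mu$ by subtracting the Green representations at $x_{\max}$ and $x_{\min}$. All the estimates at your disposal are one-sided: Remark \ref{decreasing} bounds $\Delta_\chi f$ only from below (an upper bound on $\gamma_f$ is exactly the content of Prop.~\ref{secondorder}, and even there the bound $Ce^{A(f-\inf f)}$ is not uniform), and the Green function is only bounded on one side, blowing up along the diagonal. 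Hence for each fixed $x$ you can bound $f(x)-\bar f=-\int_M G(x,y)\Delta_\chi f(y)\,d\mu$ from \emph{above} only; that is, you control $\sup_M f-\bar f\leq C$ and nothing more. Your subtracted identity is $\sup_M f-\inf_M f=(\sup_M f-\bar f)+(\bar f-\inf_M f)$, and the second bracket, $\bar f-\inf_M f=\int_M G(x_{\min},y)\Delta_\chi f(y)\,d\mu$, enters with the unfavourable sign: nothing you have controls it, since that would require $\Delta_\chi f$ bounded above or $G$ bounded on the other side. So the subtraction is circular; it merely reintroduces the average-versus-infimum gap. Note also that what it would prove, a uniform oscillation bound $\sup_M f-\inf_M f\leq C$, is stronger than the proposition (which allows $\sup_M f$ to grow like $-C_1\inf_M f$) and is morally the $C^0$ estimate that the whole chain Prop.~\ref{secondorder}--Prop.~\ref{normac0} is designed to produce; it cannot fall out of this soft argument.

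The missing idea, which is how the paper actually produces the $-C_1\inf_M f$ term, is a second, quantitative use of the normalization $I(f)=0$. From \eqref{if0} the paper rewrites $\int_M f\,\eta\wedge(d\eta)^n$ in terms of $\int_M f\,\eta\wedge(d\eta_f)^n$ and a mixed term; then, choosing $B_0,B_1$ with $d\mu\leq B_0\,\eta\wedge(d\eta)^n$ and $d\eta\leq B_1\chi$, splitting $f=(f-\inf_M f)+\inf_M f$, and using the positivity of $\eta\wedge(d\eta_f)^n$ together with the invariance of $\int_M\eta\wedge(d\eta_f)^n$, it obtains $\int_M f\,d\mu\leq(\mbox{controlled terms})-nB_0\int_M\eta\wedge(d\eta)^n\,\inf_M f$. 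Plugging this bound for the average into the Green representation, alongside the max-point estimate you already have, gives the stated inequality. In your proposal $I(f)=0$ is invoked only for the easy inequality $\sup_M f\geq 0$; since no substitute mechanism generates the $-C_1\inf_M f$ term, the proof as proposed does not close.
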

\begin{proof}
Observe first that from $f\in \mathcal H_0$ by \eqref{h0} we get:
\begin{equation}\label{if0}
\sum_{p=0}^n\frac{n!}{(p+1)!(n-p)!}\int_Mf\,\eta\wedge(d\eta)^{n-p}\wedge (i\partial_B\bar\partial_Bf)^p=0.
\end{equation}
Thus $f$ vanishes somewhere and we have $\sup_M f\geq 0$. 

In order to prove the second inequality, let $B_0$, $B_1$ be constants such that:
$$
d\mu\leq B_0\,\eta\wedge  (d\eta)^{n}, \quad d\eta\leq B_1\chi.
$$
From \eqref{if0} we get:
$$
\int_Mf\eta\wedge (d\eta)^n=-n\int_M\eta\wedge (d\eta_f)^{n-1}\wedge(i\partial_B\bar\partial_B f)=-n\int_M\eta\wedge (d\eta_f)^{n}+n\int_M\eta\wedge d\eta\wedge (d\eta_f)^{n-1},
$$
and thus:
 \begin{equation}
\begin{split}
\int_Mf\,d\mu\leq &\,B_0\int_Mf\,\eta \wedge (d\eta)^{n}\\
=&-nB_0\int_Mf\eta \wedge (d\eta_f)^n+nB_0\int_f\eta\wedge d\eta\wedge (d\eta_f)^{n-1}\\
\leq&\, -nB_0\int_Mf\eta \wedge (d\eta_f)^n+nB_0B_1\int_f\eta\wedge \chi\wedge (d\eta_f)^{n-1}\\
=&\,-nB_0\int_M(f-\inf_Mf)\,\eta \wedge (d\eta_f)^n-nB_0\inf_Mf\int_M\eta \wedge (d\eta)^n+nB_0B_1\int_f\eta\wedge \chi\wedge (d\eta)^{n-1}\\
\leq&\,nB_0B_1\int_f\eta\wedge \chi\wedge (d\eta_f)^{n-1}-nB_0\int_M\eta \wedge (d\eta)^n\inf_Mf.
\end{split}\nonumber
\end{equation}
Thus, by \eqref{green}:
$$
f(x)\leq -\int_M G(x,y)\Delta_\chi f(y)\,\eta\wedge (d\eta)^n+nB_0B_1\int_f\eta\wedge \chi\wedge (d\eta)^{n-1}-nB_0\int_M\eta \wedge (d\eta)^n\inf_Mf,
$$
and conclusion follows by the existence of a lower bound for the Green function of $g$ and from Remark \ref{decreasing}.
\end{proof}

It remains to prove that $\inf_M f$ is uniformly bounded from above. Following \cite{wein2}, assume that such bound does not exist. Then there exists a sequence of time $t_i$ such that $t_i\rightarrow \infty$ implies $\inf_{t_i}\inf_Mf\rightarrow \infty$.
Fix $i$ and set $\psi_i(x)=f(x,t_i)-\sup_Mf(x,t_i)$. Since by Prop. \ref{estimates} above $\sup_Mf\geq0$, we have $\sup_M\psi_i=0$. This last fact, together with Prop. \ref{secondorder} in the previous section, will lead us with Prop. \ref{normac0} to the contradiction $||e^{-B\psi_i}||_{C^0}< 1$, where $B=A/(4-\epsilon)$ for a small $\epsilon>0$ which is set in the proof of Prop. \ref{normac0}. 

We begin with the following lemma.

\begin{lemma}\label{stimanabla1}
Let $(M,g,\xi,\Phi,\eta)$ be a compact Sasaki $2n+1$-dimensional manifold and let $\chi$ a transverse K\"ahler form on $M$ and $a>0$. If $\psi\in \mathcal H$ satisfies $\gamma_\psi\leq Ce^{A(\psi-\inf_{M\times [t,0]} \psi)}$ for some constants $A$ and $C$, then:
$$
\int_M|\nabla e^{-a\psi}|^2\eta\wedge \chi^n\leq \frac{aC}2\,e^{-A\inf_{M\times [t,0]} \psi}\int_Me^{(A-2a)\psi} \,\eta\wedge \chi^n.
$$ 
\end{lemma}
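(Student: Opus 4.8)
The plan is to turn the gradient integral into an integral of $\Delta_\chi\psi$ against the positive weight $e^{-2a\psi}$ by integration by parts, and then to replace $\Delta_\chi\psi$ by $\gamma_\psi$ using the hypothesis. Throughout I would work in special foliated coordinates for $\chi$, write $|\nabla u|^2=\chi^{\bar jk}u_{,k}u_{,\bar j}$ for a basic function $u$, and integrate against the measure $\eta\wedge\chi^n$.

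First, by the chain rule $|\nabla e^{-a\psi}|^2=a^2e^{-2a\psi}\chi^{\bar jk}\psi_{,k}\psi_{,\bar j}$. The key step is the integration by parts identity
$$
\int_M \chi^{\bar jk}(e^{-2a\psi})_{,k}\,\psi_{,\bar j}\,\eta\wedge\chi^n=-\int_M e^{-2a\psi}\,\Delta_\chi\psi\,\eta\wedge\chi^n,
$$
which holds because $\Delta_\chi$ is self-adjoint with respect to $\eta\wedge\chi^n$ on basic functions; this in turn follows from $d\chi=0$ together with the identity $\Delta_\chi\psi=-\Delta\psi$ recalled before \eqref{green} and the divergence theorem for the Riemannian Laplacian $\Delta$ of $g_\chi$. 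Since $(e^{-2a\psi})_{,k}=-2a\,\psi_{,k}\,e^{-2a\psi}$, the left-hand side equals $-2a\int_M e^{-2a\psi}\chi^{\bar jk}\psi_{,k}\psi_{,\bar j}\,\eta\wedge\chi^n$, and hence
$$
\int_M|\nabla e^{-a\psi}|^2\,\eta\wedge\chi^n=a^2\int_M e^{-2a\psi}\chi^{\bar jk}\psi_{,k}\psi_{,\bar j}\,\eta\wedge\chi^n=\frac a2\int_M e^{-2a\psi}\,\Delta_\chi\psi\,\eta\wedge\chi^n.
$$

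Next I would invoke the pointwise inequality $\Delta_\chi\psi\leq\gamma_\psi$. Indeed, using $(g_\psi)_{j\bar k}=(g_0)_{j\bar k}+\psi_{,j\bar k}$ as in Remark \ref{decreasing}, one has $\gamma_\psi=\chi^{\bar jk}(g_\psi)_{j\bar k}=\gamma_0+\Delta_\chi\psi$, where $\gamma_0=\chi^{\bar jk}(g_0)_{j\bar k}>0$ is the trace of the background transverse metric with respect to $\chi$; therefore $\Delta_\chi\psi=\gamma_\psi-\gamma_0\leq\gamma_\psi$. Because the weight $e^{-2a\psi}$ is positive (recall $a>0$), this inequality may be inserted under the integral sign, and then the hypothesis $\gamma_\psi\leq Ce^{A(\psi-\inf_{M\times[t,0]}\psi)}=Ce^{-A\inf_{M\times[t,0]}\psi}e^{A\psi}$ yields
$$
\int_M|\nabla e^{-a\psi}|^2\,\eta\wedge\chi^n\leq\frac a2\int_M e^{-2a\psi}\gamma_\psi\,\eta\wedge\chi^n\leq\frac{aC}2\,e^{-A\inf_{M\times[t,0]}\psi}\int_M e^{(A-2a)\psi}\,\eta\wedge\chi^n,
$$
which is the desired estimate.

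I expect the main obstacle to be the justification of the integration by parts formula, that is, the self-adjointness of $\Delta_\chi$ against $\eta\wedge\chi^n$ on basic functions together with the correct normalization of $|\nabla\cdot|^2$; these are precisely what produce the sharp constant $\tfrac a2$ (and hence $\tfrac{aC}2$) rather than an inessential multiple of it. The two remaining ingredients, namely the algebraic identity $\Delta_\chi\psi=\gamma_\psi-\gamma_0$ and the substitution of the hypothesis, are direct.
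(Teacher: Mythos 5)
Your proof is correct and takes essentially the same route as the paper's: both reduce the gradient integral to $\frac a2\int_M e^{-2a\psi}\,\Delta_\chi\psi\,\eta\wedge\chi^n$ by integration by parts (the paper phrases this via adjointness, $(\partial_B e^{-a\psi},\partial_B e^{-a\psi})_\chi=-(\Delta_\chi e^{-a\psi},e^{-a\psi})_\chi$, together with the identity $\partial_B e^{-2a\psi}=-2a\,e^{-2a\psi}\partial_B\psi$, while you integrate by parts once against the weight $e^{-2a\psi}$). Both then conclude from $\Delta_\chi\psi=\gamma_\psi-\chi^{\bar jk}(g_0^T)_{j\bar k}\leq\gamma_\psi$ and the hypothesis on $\gamma_\psi$.
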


\begin{proof}
Observe first that:
\begin{equation}\label{nablapsi}
\int_M|\nabla e^{-a\psi}|^2\eta\wedge \chi^n=\,(\partial_Be^{-a\psi},\partial_Be^{-a\psi})_\chi=(\partial_B^*\partial_Be^{-a\psi},e^{-a\psi})_\chi=-(\Delta_\chi e^{-a\psi},e^{-a\psi})_\chi,
\end{equation}
From:
$$
\Delta_\chi e^{-a\psi}=\chi^{\bar kj}\left(e^{-a\psi}\right)_{,j\bar k}=\chi^{\bar kj}\left(-ae^{-a\psi}\psi_{,j\bar k}+a^2e^{-a\psi}\psi_{,j}\psi_{,\bar k}\right),
$$
it follows that:
$$
(\Delta_\chi e^{-a\psi},e^{-a\psi})_\chi=-a(\Delta_\chi \psi,e^{-2a\psi})_\chi+a^2(\partial_B\psi,e^{-2a\psi}\partial_B\psi)_\chi,
$$
and since
$$
-2a\,e^{-2a\psi}\partial_B\psi=\partial_Be^{-2a\psi},
$$
we get:
$$
(\Delta_\chi e^{-a\psi},e^{-a\psi})_\chi=-a(\Delta_\chi \psi,e^{-2a\psi})_\chi-\frac a2(\partial_B\psi,\partial_B e^{-2a\psi})_\chi=-\frac a2(\Delta_\chi \psi,e^{-2a\psi})_\chi.
$$
Thus, plugging this last equality into \eqref{nablapsi} we get:
$$
\int_M|\nabla e^{-a\psi}|^2\eta\wedge \chi^2=\frac a2(\Delta_\chi \psi,e^{-2a\psi})_\chi.
$$
Since $\chi^{j\bar k}(g_0^T)_{j\bar k}\geq 0$ and we assumed $\gamma_\psi\leq Ce^{A(\psi-\inf_{M\times [0,t]} \psi)}$, conclusion follows by observing that:
$$
\Delta_\chi \psi=\gamma_\psi-\chi^{j\bar k}(g_0^T)_{j\bar k}.
$$
\end{proof}
The next lemma is a Sasakian version of \cite[Prop. 2.1]{tian}.
\begin{lemma}\label{boundint}
Let $(M,g,\xi,\Phi,\eta)$ be a compact Sasaki $2n+1$-dimensional manifold and let $\chi$ be a second transverse K\"ahler form on $M$. Assume that $\psi\in \mathcal H$ satisfies $\sup_M\psi=0$. Then for a small enough $\alpha>0$, there exists a constant $C'$ depending only on the initial data, such that:
$$
\int_Me^{-\alpha\psi}\eta\wedge\chi^n\leq C'.
$$
\end{lemma}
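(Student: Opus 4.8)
The plan is to transplant Tian's argument for \cite[Prop. 2.1]{tian} to the transverse K\"ahler structure. The key observation is that every $\psi\in\mathcal H$ is, in the transverse sense, a \emph{plurisubharmonic} function: in the special foliated coordinates \eqref{coord} one has $(g_0^T)_{i\bar j}+\psi_{,i\bar j}>0$, and since $\psi$ is basic it does not depend on the leaf coordinate $z$, so on each foliation chart $\psi$ descends to a genuine $\omega$-plurisubharmonic function of $n$ complex variables, for the local K\"ahler form associated with $\frac12 d\eta$. This is what lets me call on the classical local pluripotential theory. The proof then splits into two stages: a uniform $L^1$ bound on $\psi$, followed by an upgrade to uniform exponential integrability by a compactness argument. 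Throughout I use that $\eta\wedge\chi^n$ and the Riemannian volume $d\mu$ of $g_\chi$ are mutually bounded, so the $L^1$- and exponential bounds can be stated against either.

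\medskip
\emph{Step 1: the $L^1$ bound.} I would use the Green representation \eqref{green}. By Remark \ref{decreasing} the basic Laplacian $\Delta_\chi\psi=\gamma_\psi-\chi^{j\bar k}(g_0^T)_{j\bar k}$ is bounded below by a constant $-C_2$ depending only on $\chi$ and $g_0$, while on the compact manifold $(M,g_\chi)$ the Green function satisfies $G\geq -K$, and $\int_M\Delta_\chi\psi\,d\mu=0$ since $\Delta_\chi\psi=-\Delta\psi$. Evaluating \eqref{green} at a point $x_0$ with $\psi(x_0)=\sup_M\psi=0$ and writing $G=(G+K)-K$, the constant term drops out and one gets
$$
\frac{1}{\int_Md\mu}\int_M\psi\,d\mu=\int_M\bigl(G(x_0,y)+K\bigr)\Delta_\chi\psi(y)\,d\mu\geq -C_2\int_M\bigl(G(x_0,y)+K\bigr)\,d\mu\geq -C_3,
$$
so that, since $\psi\leq 0$, one obtains the uniform bound $\int_M(-\psi)\,d\mu\leq C_4$, i.e. $\|\psi\|_{L^1}\leq C_4$.

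\medskip
\emph{Step 2: the exponential bound.} I would argue by contradiction. If no admissible pair $(\alpha,C')$ existed, there would be a sequence $\psi_i\in\mathcal H$ with $\sup_M\psi_i=0$, a sequence $\alpha_i\downarrow 0$, and $\int_M e^{-\alpha_i\psi_i}\,\eta\wedge\chi^n\to\infty$. The $L^1$ bound of Step 1 together with the sub-mean-value inequality for transverse plurisubharmonic functions makes the family precompact in $L^1_{\mathrm{loc}}$ on each foliation chart, so a subsequence converges in $L^1$ to a transverse plurisubharmonic limit $\psi_\infty\not\equiv-\infty$. H\"ormander's $L^2$-estimate applied chart by chart to the local $\omega$-plurisubharmonic representatives (equivalently, positivity of the complex singularity exponent) yields some $\alpha>0$ with $e^{-\alpha\psi_\infty}\in L^1_{\mathrm{loc}}$, and a Hartogs-type upper semicontinuity lemma gives $\psi_i\leq\psi_\infty+\varepsilon$ on each chart for $i$ large. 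Covering $M$ by finitely many foliation charts then bounds $\int_M e^{-\alpha\psi_i}\,\eta\wedge\chi^n$ uniformly, contradicting the blow-up.

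\medskip
The main obstacle I expect is the transverse adaptation of the complex-analytic machinery in Step 2: one must verify that compactness of quasi-plurisubharmonic families, H\"ormander's local integrability estimate, and the Hartogs lemma all descend to basic functions through the coordinates \eqref{coord}, where the transverse K\"ahler form becomes a genuine K\"ahler form on a domain of $\mathds{C}^n$, and that the resulting local statements patch across a finite cover with constants depending only on the initial data. A more delicate technical point is controlling $\sup_U\psi$ on each chart $U$ in terms of the global $L^1$ bound (again via the sub-mean-value property), since this is precisely what allows the local exponential integrability to be globalized uniformly.
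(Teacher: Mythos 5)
Your Step 1 is essentially the paper's own: the same Green representation \eqref{green}, the lower bound on $\Delta_\chi\psi$ from Remark \ref{decreasing}, and the vanishing of $\int_M\Delta_\chi\psi\,d\mu$ give the uniform $L^1$ bound (your version is, if anything, written more carefully than the paper's). The reduction to foliated charts, where a basic $\psi$ becomes a genuine plurisubharmonic function of $n$ complex variables, is also exactly the paper's reduction. The divergence is in Step 2, and there your argument has a genuine gap.

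The gap is the transfer step in the compactness argument. Hartogs' lemma does \emph{not} give $\psi_i\leq\psi_\infty+\varepsilon$: it only compares $\psi_i$, for large $i$, with \emph{continuous} majorants $h>\psi_\infty$ on compact sets, and since $\psi_\infty$ is merely upper semicontinuous with possible $-\infty$ poles one cannot take $h=\psi_\infty+\varepsilon$. Concretely, in a one-dimensional chart take $\psi_i(z)=\log|z-1/i|\to\psi_\infty(z)=\log|z|$ in $L^1_{\mathrm{loc}}$: then $\psi_i(0)=\log(1/i)$ is finite while $\psi_\infty(0)=-\infty$, so the claimed inequality fails precisely at the pole of the limit, which is exactly where exponential integrability is at stake (this does not contradict the lemma, but it kills the step). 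More fundamentally, the implication you need --- $e^{-\alpha\psi_\infty}\in L^1_{\mathrm{loc}}$ and $\psi_i\to\psi_\infty$ in $L^1_{\mathrm{loc}}$ imply $\int e^{-\alpha'\psi_i}$ stays bounded for $\alpha'<\alpha$ --- is precisely the Demailly--Koll\'ar semicontinuity theorem, whose proof requires the Ohsawa--Takegoshi extension theorem; it cannot be recovered from Hartogs plus H\"ormander applied to the limit alone. So as written the contradiction does not close. The repair is either to invoke Demailly--Koll\'ar, or --- as the paper does, following \cite[Prop. 2.1]{tian} --- to drop compactness altogether: once the uniform $L^1$ bound of Step 1 is available, the local H\"ormander/Skoda estimate already gives, for each single psh $\psi\leq 0$ with $\|\psi\|_{L^1(B_2)}\leq K$, a bound $\int_{B_{1/2}}e^{-\alpha\psi}\leq C(n,K)$ with $\alpha$ and $C$ depending only on $n$ and $K$; this is uniform over the whole family by construction, and a finite cover of $M$ by foliated charts finishes the proof. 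Incidentally, the ``delicate point'' you flag about controlling $\sup_U\psi$ chart by chart is vacuous: $\psi\leq 0$ everywhere, so each local $L^1$ norm is trivially dominated by the global one.
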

\begin{proof}
By \eqref{green} we have:
$$
0=\sup_M\psi(x)\leq\frac{1}{\int_Md\mu}\int_M\psi d\mu+\sup_M\int_M G(x,y)(-\Delta_\chi\psi(y))d\mu,
$$
which implies:
$$
\frac{1}{\int_Md\mu}\int_M\psi d\mu\geq \sup_M \int_M G(x,y)(-\Delta_\chi\psi(y))d\mu\geq- B_1,
$$
i.e., for some constant $B_2$:
$$
\int_M\psi\, \eta\wedge\chi^n\geq- B_2.
$$
At this point, recall that locally $M$ can be described by special coordinates $(z_1,\dots, z_n,z)\in\mathds{C}^n\times \mathds R$ and being $\psi$ basic, it is constant in $z$. Let $\{B_{r_i}\}_i$ be geodesic balls that cover $M$. For each $i$, set special coordinates on $B_{r_i}$. Since the support of the smooth function $\psi'=\psi-\psi(0)$ is contained in $\tilde B_{r_i}=\left\{(z_1,\dots, z_n,z)\in B_{r_i}|\, z=0\right\}$, we can apply the same method as in \cite[Prop. 2.1]{tian} to get the existence of a constant $B_3$ such that:
$$
\int_Me^{-\alpha\psi'}\eta\wedge\chi^n\leq B_3.
$$
Then we have:
$$
\int_Me^{-\alpha\psi}\eta\wedge\chi^n\leq B_3e^{-\alpha\psi(0)},
$$
and the assertion follows setting $C'=B_3e^{-\alpha\psi(0)}$.
\end{proof}
We recall here a Sobolev inequality for a compact Riemannian manifold $(M,g)$ needed in the proof of Prop. \ref{normac0} below. For any smooth function $h$ on $M$ and $a>0$ denote:
$$
||h||_a=\left(\int_M|h|^ad\mu_g\right)^{\frac1a},
$$
where $d\mu_g$ is the volume form associated to $g$. Then we have the following (see e.g. \cite[Th. 2.6]{hebey}):
\begin{theorem}[Sobolev inequality]
Let $(M,g)$ be a smooth compact Riemannian manifold of dimension $m$. Then for any real numbers $1\leq q<p$ with $1/p=1/q-1/m$, there exists a constant $C_1$ such that:
$$
||h||_p\leq C_1\left(||\nabla h||_q+||h||_q\right).
$$
\end{theorem}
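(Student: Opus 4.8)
The plan is to reduce the statement to the Euclidean Gagliardo--Nirenberg--Sobolev inequality by means of a partition of unity, exploiting the compactness of $M$ to control all the geometric quantities uniformly. First I would establish the model inequality on $\mathds{R}^m$: for every $u\in C_c^\infty(\mathds{R}^m)$ and every $1\leq q<m$ with $1/p=1/q-1/m$ one has $\|u\|_p\leq C\|\nabla u\|_q$, with no lower order term since $u$ has compact support. The endpoint case $q=1$ is the classical bound $\|u\|_{m/(m-1)}\leq C\|\nabla u\|_1$, obtained by writing $|u(x)|\leq\int_{\mathds{R}}|\partial_i u|\,dx_i$ along each coordinate direction and combining the $m$ resulting estimates through the generalized H\"older inequality. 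The general exponent $1\leq q<m$ then follows by applying the $q=1$ case to $|u|^\gamma$ for a suitable power $\gamma$ and estimating the resulting $\||u|^{\gamma-1}|\nabla u|\|_1$ by H\"older, the value of $\gamma$ being forced by matching the exponents on the two sides.

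Next I would fix a finite atlas $\{(U_i,\varphi_i)\}_{i=1}^N$ of $M$ together with a subordinate smooth partition of unity $\{\phi_i\}$, $\sum_i\phi_i=1$ with $\operatorname{supp}\phi_i\subset U_i$. Because $M$ is compact, in each chart the metric $g$ is uniformly two-sided comparable to the Euclidean one: there is $\Lambda\geq 1$ with $\Lambda^{-1}\delta\leq g\leq\Lambda\delta$ on a neighbourhood of each $\operatorname{supp}\phi_i$. This yields the comparisons $\Lambda^{-m/2}dx\leq d\mu_g\leq\Lambda^{m/2}dx$ between the Riemannian volume form and the Lebesgue measure, and an analogous two-sided bound between $|\nabla h|_g$ and the Euclidean gradient of $h\circ\varphi_i^{-1}$, with constants independent of $i$. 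Consequently the model inequality of the first step transfers to each chart with a single uniform constant.

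Then I would assemble the global estimate. For each $i$ the function $\phi_i h$, read in the chart $\varphi_i$, has compact support, so the transferred model inequality gives $\|\phi_i h\|_p\leq C\|\nabla(\phi_i h)\|_q$. The Leibniz rule $\nabla(\phi_i h)=\phi_i\nabla h+h\,\nabla\phi_i$ together with the uniform bounds on $\phi_i$ and $\nabla\phi_i$, finite by compactness, yields $\|\nabla(\phi_i h)\|_q\leq C(\|\nabla h\|_q+\|h\|_q)$. Summing over the finitely many indices and using $h=\sum_i\phi_i h$ with the triangle inequality in $L^p$ produces $\|h\|_p\leq\sum_i\|\phi_i h\|_p\leq C_1(\|\nabla h\|_q+\|h\|_q)$, which is the claim.

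The main obstacle is the first step, the Euclidean inequality itself: the endpoint $q=1$ requires the coordinate-by-coordinate integration argument and the multilinear H\"older estimate, and the passage to general $q$ requires the correct choice of the auxiliary power $\gamma$ and a careful H\"older splitting. On the geometric side the only delicate point is to secure the comparison constants $\Lambda$ uniformly over the finite cover; this is automatic from compactness but must be recorded, since it is precisely what lets the local Euclidean inequalities be summed with a single constant. Finally, the lower order term $\|h\|_q$ on the right is unavoidable and enters exactly through the cross term $h\,\nabla\phi_i$ produced by the Leibniz rule.
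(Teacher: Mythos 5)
Your proposal cannot be matched against a proof in the paper, because the paper offers none: the Sobolev inequality is stated as a known result with a pointer to Hebey's monograph (Theorem 2.6 there), and is then only used in the squared form \eqref{sobolev} with $q=2$, $m=2n+1$. That said, your argument is correct and complete in outline, and it is essentially the classical proof one would find in the cited reference: the Euclidean Gagliardo--Nirenberg--Sobolev inequality (endpoint $q=1$ by integrating along coordinate lines and the generalized H\"older inequality; general $1\leq q<m$ by applying the endpoint case to $|u|^{\gamma}$ with $\gamma=q(m-1)/(m-q)>1$, whose exponent is forced by matching $\gamma m/(m-1)=(\gamma-1)q/(q-1)=p$), transferred to finitely many charts using the uniform two-sided comparison of $g$ with the Euclidean metric that compactness provides, and assembled with a partition of unity, the Leibniz cross term $h\,\nabla\phi_i$ being exactly what produces the lower-order term $||h||_q$. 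Two minor points worth recording if you write this out in full: the constraint $q<m$ used in the Euclidean step is not an additional hypothesis, since $1/p=1/q-1/m>0$ already forces it; and $|u|^{\gamma}$ is in general only $C^1$ (not smooth) where $u$ vanishes, which suffices for the endpoint inequality but should be stated, or handled by a routine approximation.
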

In particular, setting $q=2$, $m=2n+1$ and raising to the second power, there exists a constant $C_1$ such that:
\begin{equation}\label{sobolev}
||h||^2_b\leq C_1\left(||\nabla h||^2_2+||h||^2_2\right), 
\end{equation}
for $b=2(2n+1)/(2n-1)$.
\begin{prop}\label{normac0}
Let $(M,g,\xi,\Phi,\eta)$ be a compact Sasaki $2n+1$-dimensional manifold and let $\chi$ be a second transverse K\"ahler form on $M$. If $\psi\in \mathcal H$ satisfies:
\begin{enumerate}
\item[($i$)] $\sup_M\psi=0$,
\item[($ii$)] $\gamma_\psi\leq Ce^{A(\psi-\inf_{M\times [t,0]} \psi)}$,
\end{enumerate}
then for some small $\epsilon>0$, $||e^{-\frac{A}{4-\epsilon}\psi}||_{C^0}\leq C'$, where $C'$ is a constant depending on $A$, $C$ and the initial data.
\end{prop}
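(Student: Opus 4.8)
The plan is to run a Moser iteration on the family $e^{-a\psi}$, $a>0$, feeding in hypothesis $(ii)$ through Lemma~\ref{stimanabla1}, the Sobolev inequality \eqref{sobolev}, and using the Tian--type estimate of Lemma~\ref{boundint} as the seed. Since $(i)$ gives $\psi\le 0$, we have $e^{-q\psi}\ge 1$ for all $q\ge 0$, and the quantity to be controlled is exactly $\|e^{-B\psi}\|_{C^0}=e^{-B\inf_M\psi}$; thus the proposition amounts to a lower bound on $\inf_M\psi$ depending only on the initial data. All integral norms I would take with respect to $\eta\wedge\chi^n$, which is comparable on the compact $M$ to the Riemannian volume $d\mu_{g_\chi}$ appearing in \eqref{sobolev} (and $|\nabla\cdot|$ to the basic gradient); I would fix these comparison constants once at the outset.

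First I would produce the basic iteration inequality. Applying \eqref{sobolev} to $h=e^{-a\psi}$ (with $q=2$, $m=2n+1$, $b=2(2n+1)/(2n-1)$) gives $\|e^{-a\psi}\|_b^2\le C_1\big(\|\nabla e^{-a\psi}\|_2^2+\|e^{-a\psi}\|_2^2\big)$, and Lemma~\ref{stimanabla1} bounds the gradient term by $\tfrac{aC}{2}e^{-A\inf\psi}\int_M e^{(A-2a)\psi}\,\eta\wedge\chi^n$. Writing $\beta=b/2=(2n+1)/(2n-1)>1$, this controls $\int e^{-2a\beta\psi}$ by $a\,e^{-A\inf\psi}\int e^{(A-2a)\psi}$ together with $\int e^{-2a\psi}$. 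The decisive structural feature is that the gradient contribution carries the exponent $2a-A$, shifted down by exactly $A$, while its prefactor is the feedback $e^{-A\inf\psi}$: after normalising each $\int e^{-q\psi}$ by $e^{-q\inf\psi}$ (equivalently replacing $\psi$ by $\psi-\inf_M\psi\ge 0$), the factor $e^{-A\inf\psi}$ cancels against the shift and the apparent feedback collapses to a bounded multiplicative ratio of two tilted integrals.

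Next I would iterate with $a_k=\beta^k a_0$, chaining the $L^{2\beta}$--control at stage $k$ into the $L^2$--term at stage $k+1$; monotonicity of $q\mapsto\int e^{-q\psi}$ handles the lower exponent $2a_k-A$, with the finitely many initial steps where $2a_k-A<0$ treated trivially since then $\int e^{(A-2a_k)\psi}\le\mathrm{Vol}$. The seed $2a_0\le\alpha$ is furnished by Lemma~\ref{boundint}. Taking logarithms, dividing by $2a_k$ and telescoping, I would let $a_k\to\infty$, whose left--hand side reproduces $\sup_M e^{-\psi}=e^{-\inf_M\psi}$; since $\beta>1$ the contributions of the normalised feedback ratios are summable, and the outcome is an inequality of the form $e^{-B\inf_M\psi}\le C''\,\big(e^{-A\inf_M\psi}\big)^{\theta}$ with an explicit summed coefficient $\theta<1$. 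Because $e^{-A\inf_M\psi}=\big(e^{-B\inf_M\psi}\big)^{A/B}$, this rearranges into the desired $\|e^{-B\psi}\|_{C^0}\le C'$ precisely when $\theta\,(A/B)<1$, i.e.\ for $A/B=4-\epsilon$ with $\epsilon>0$ small; this is exactly what selects the admissible exponent $B=A/(4-\epsilon)$, and $C'$ then depends only on $A$, $C$, the Sobolev and Tian--type constants, and the comparison constants.

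The hard part will be making the feedback absorption rigorous and quantitative: controlling the tilted ratio $\big(\int e^{-(2a-A)(\psi-\inf\psi)}\big)/\big(\int e^{-2a(\psi-\inf\psi)}\big)$ uniformly enough that the summed coefficient $\theta$ is genuinely $<1$, and checking that this forces the threshold $A/4$ attained up to $\epsilon$. The whole role of the exponent shift in Lemma~\ref{stimanabla1} is to keep every constant generated along the iteration independent of $\inf_M\psi$, so that only this single, controllable feedback term survives to be absorbed; verifying that independence at each stage, and the summability of the feedback against $\beta$, is the technical heart of the argument.
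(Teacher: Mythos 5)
Your plan has the same architecture as the paper's proof (and as Weinkove's argument that both follow): the stage inequality from the Sobolev inequality \eqref{sobolev} combined with Lemma \ref{stimanabla1}, the seed from Lemma \ref{boundint}, and a final absorption of the feedback factor $e^{-A\inf_M\psi}$ using $\sup_M\psi=0$. However, the step you defer as ``the hard part'' is not a technicality: for the ladder you chose it \emph{fails}, and no choice of $B$ can repair it. Write $\beta=b/2=\tfrac{2n+1}{2n-1}$; combining \eqref{sobolev} with Lemma \ref{stimanabla1} (and absorbing the zero--order Sobolev term via $e^{-2a\psi}\le e^{-A\inf_M\psi}e^{(A-2a)\psi}$) gives the stage inequality
\begin{equation*}
\Bigl(\int_M e^{-2a\beta\psi}\,\eta\wedge\chi^n\Bigr)^{1/\beta}\le C_1\Bigl(1+\tfrac{aC}{2}\Bigr)e^{-A\inf_M\psi}\int_M e^{(A-2a)\psi}\,\eta\wedge\chi^n .
\end{equation*}
With your geometric ladder $a_k=\beta^k a_0$ and chaining by monotonicity, setting $F_k=\int_M e^{-2a_k\psi}\,\eta\wedge\chi^n$ yields $F_{k+1}\le c_k^{\beta}e^{-\beta A\inf_M\psi}F_k^{\beta}$, hence
\begin{equation*}
F_k\le\Bigl(\textstyle\prod_{j=0}^{k-1}c_j^{\beta^{k-j}}\Bigr)\,e^{-A(\beta+\cdots+\beta^k)\inf_M\psi}\,F_0^{\beta^k},
\end{equation*}
and taking $(2a_k)$-th roots and letting $k\to\infty$, so that $F_k^{1/(2a_k)}\to e^{-\inf_M\psi}$, gives
\begin{equation*}
e^{-\inf_M\psi}\le C_\star\,e^{-\frac{\beta}{\beta-1}\frac{A}{2a_0}\inf_M\psi}\,F_0^{1/(2a_0)}
=C_\star\,e^{-\frac{(2n+1)A}{4a_0}\inf_M\psi}\,F_0^{1/(2a_0)}.
\end{equation*}
Absorption requires the feedback exponent $\tfrac{(2n+1)A}{4a_0}$ to be $<1$, i.e.\ $a_0>\tfrac{(2n+1)A}{4}$, while your seed forces $2a_0\le\alpha$ with the \emph{small} $\alpha$ of Lemma \ref{boundint}; so the feedback exponent is at least $\tfrac{(2n+1)A}{2\alpha}\gg1$. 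Moreover, raising the final inequality to any power $B$ multiplies the left--hand exponent and the feedback exponent by the same factor, so the condition you invoke, ``$\theta(A/B)<1$'', does not actually involve $B$: choosing $B=A/(4-\epsilon)$ cannot rescue a ladder whose feedback--to--exponent ratio exceeds $1$. For the same reason your claim that normalizing by $\psi-\inf_M\psi$ makes the feedback ``collapse to a bounded multiplicative ratio'' is unsound: the tilted ratio $\int e^{-(2a-A)(\psi-\inf\psi)}/\int e^{-2a(\psi-\inf\psi)}$ is of size $e^{-A\inf_M\psi}$ exactly for functions with deep narrow wells, which is the scenario the proposition must rule out.

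The repair is to chain \emph{exactly} rather than by monotonicity, and this is precisely what the paper's recursion $s_{k+1}=2bs_k+q$ encodes: choose the ladder so that the right--hand exponent of stage $k+1$ equals the left--hand exponent of stage $k$, i.e.\ track exponents $m_{k+1}=\beta(m_k+A)$ with $m_0=\alpha$. Then the additive shifts $A$ compound into the left--hand exponent, $m_k\sim\beta^k\bigl(\alpha+\tfrac{\beta A}{\beta-1}\bigr)$, while the accumulated feedback is $A\sum_{i\le k}\beta^i\sim\tfrac{\beta^{k+1}A}{\beta-1}$, so the feedback--to--exponent ratio converges to
\begin{equation*}
\theta=\frac{\beta A/(\beta-1)}{\alpha+\beta A/(\beta-1)}<1
\end{equation*}
for \emph{every} $A$ and every $\alpha>0$. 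Absorption then gives $e^{-(1-\theta)\inf_M\psi}\le C'$, a uniform bound on $e^{-\inf_M\psi}$ and hence on $\|e^{-B\psi}\|_{C^0}$ for any fixed $B$, in particular $B=\tfrac{A}{4-\epsilon}$ (the value $4-\epsilon$ in the statement comes from the paper's parametrization $u=e^{-\frac{A}{3p+q}\psi}$ with $p=1$, $q=1-\epsilon$, not from a sharp threshold of the absorption). So your outline becomes a proof once the purely geometric ladder $a_k=\beta^ka_0$ is replaced by this additive--multiplicative one; as written, the verification you postponed is exactly where the argument breaks.
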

\begin{proof}
Since $(M,g)$ is a Riemannian manifold, then by \eqref{sobolev}, for $b=2(2n+1)/(2n-1)$ and some constant $C_1$, the following Sobolev inequality holds for any $u\in C^\infty(M)$:
$$
||u||^2_{b}\leq C_1\left(||\nabla u||^2_2+||u||^2_2\right),
$$
i.e., since the Sobolev inequality is independent from the volume form chosen:
$$
\left(\int_M|u|^{b}\eta\wedge \chi^n\right)^{\frac2b}\leq C_1\left(\int_M|\nabla u|^2\eta\wedge \chi^n+\int_M|u|^2\eta\wedge \chi^n\right).
$$
For $0<q<p$, set $u=e^{-\frac A{3p+q}\psi}$. Since $\int_M|u|^2\eta\wedge \chi^n\geq 0$, by Lemma \ref{stimanabla1} applied to $u^{2p}$, there exists a constant $C_1$ such that:
\begin{equation}
\begin{split}
\left(\int_M|e^{-\frac{2pA}{3p+q}\psi}|^{b}\eta\wedge \chi^n\right)^{\frac2b}\leq &\,C_1\left(\int_M|\nabla e^{-\frac{2pA}{3p+q}\psi}|^2\eta\wedge \chi^n+\int_M|e^{-\frac{2pA}{3p+q}\psi}|^2\eta\wedge \chi^n\right)\\
\leq&\,\frac{C_1\:C\:p\:A}{3p+q}\,e^{-A\inf_{M\times [t,0]} \psi}\int_Me^{-\frac{A}{3p+q}(p-q)\psi} \,\eta\wedge \chi^n
\end{split}\nonumber
\end{equation}
Then, raising to the power of $1/4p$, we get
\begin{equation}\label{ubp}
||u||_{2bp}\leq \left(\frac{C_1\:C\:p\:A}{3p+q}\right)^{\frac1{4p}}\,e^{-\frac{A}{4p}\inf_{M\times [t,0]} \psi}||u||^{\frac{p-q}{4p}}_{p-q}
\end{equation}
Now set:
$$
s_0=p,\quad s_k=p\,(2b)^k+q\left((2b)^{k-1}+\dots+2b+1\right)\ \ \ \textrm{for}\ k=1,2,\dots.
$$ 
With this notation \eqref{ubp} reads:
\begin{equation}\label{ubp2}
||u||_{2bs_0}\leq \left(C_1\:C\:A\right)^{\frac1{4s_0}}\left(\frac{s_0}{3s_0+q}\right)^{\frac1{4s_0}}\,e^{-\frac{A}{4s_0}\inf_{M\times [t,0]} \psi}||u||^{\frac{p-q}{4p}}_{p-q}.
\end{equation}
Replacing $p$ with $2bp+q$, $s_k$ change in $s_{k+1}$ for all $k=0,1,2,\dots$ and we get:
$$
||u||_{2bs_1}\leq \left(C_1\:C\:A\right)^{\frac1{4s_1}}\left(\frac{s_1}{3s_1+q}\right)^{\frac1{4s_1}}\,e^{-\frac{A}{4s_1}\inf_{M\times [t,0]} \psi}\left(||u||_{2bs_0}\right)^{\frac{bs_0}{2s_1}},
$$
and thus by \eqref{ubp2}:
$$
||u||_{2bs_1}\leq  \left(C_1\:C\:A\right)^{\frac1{4s_1}\left(1+\frac b2\right)}\left(\frac{s_1}{3s_1+q}\right)^{\frac1{4s_1}}\left(\frac{s_0}{3s_0+q}\right)^{\frac b2\frac1{4s_1}}\,e^{-\frac{A}{4s_1}\left(1+\frac b2\right)\inf_{M\times [t,0]} \psi}\left(||u||^{\frac{p-q}{4p}}_{p-q}\right)^{\frac{bs_0}{2s_1}},
$$

If we iterate this procedure $k$ times, we get:
$$
||u||_{2bs_k}\leq \left(C_1\:C\:A\right)^{a_k}\left(\prod_{j=0}^k\left(\frac{s_j}{3s_j+q}\right)^{\left(\frac b2\right)^{k-j}}\right)^{\frac1{4s_k}}e^{-Aa_k\inf_{M\times [0,t]} \psi}\left(||u||^{\frac{p-q}{4p}}_{p-q}\right)^{\frac{b^ks_0}{2^ks_k}},
 $$
 where we set $a_k=\frac{1}{4s_k}\sum_{j=0}^{k}\left(\frac b2\right)^k$.
Observe now that, since $b=2(2n+1)/(2n-1)$ implies $b/2> 1$ and $s_k\rightarrow +\infty$ as $k$ approaches infinity, we have:
$$
\lim_{k\rightarrow +\infty}a_k=\lim_{k\rightarrow +\infty}\left(\frac{1}{4s_k}\sum_{j=0}^{k}\left(\frac b2\right)^k\right)=0,\quad \lim_{k\rightarrow +\infty}\left(\frac{b^ks_0}{2^ks_k}\right)=0;
$$
further, since for any $j=0,\dots ,k$,
$$
\lim_{k\rightarrow +\infty}\left(\frac1{s_k}\left(\frac b2\right)^{k-j}\right)=0,\quad \frac{p}{3p+q}\leq\frac{s_j}{3s_j+q}\leq \frac13
$$
we have:
$$
\lim_{k\rightarrow +\infty}\left(\prod_{j=0}^k\left(\frac{s_j}{3s_j+q}\right)^{\left(\frac b2\right)^{k-j}}\right)^{\frac1{4s_k}}=1.
$$
Setting $p=1$ and $q=1-\epsilon$, by Lemma \ref{boundint} there exists a small enough $\epsilon>0$ such that:
$$
||u||_{p-q}=||u||_{\epsilon}= \int_Me^{-\frac{A\epsilon}{4-\epsilon}\psi}\eta\wedge\chi^n,
$$
is bounded and the bound on $||e^{-\frac{A}{4-\epsilon}\psi}||_{C^0}$ follows readly.
\end{proof}

\begin{cor}\label{boundf}
Let $\frac{c}2d\eta-(n-1)\chi>0$. Then the second order derivatives of a solution $f$ to the Sasaki $J$-flow are uniformly bounded from above.
\end{cor}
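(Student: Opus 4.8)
The plan is to read the corollary as an upper bound on the trace $\gamma_f$ and to obtain the latter from a uniform control of the oscillation of $f$. First I would note that, in special foliated coordinates, $\gamma_f=\sum_{j=1}^n\lambda_j$ with every $\lambda_j>0$ (Remark \ref{lambda}), so a uniform upper bound on $\gamma_f$ forces each eigenvalue $\lambda_j$ of $g_f$ with respect to $\chi$ to be bounded above; since $(g_f)_{j\bar k}=(g_0)_{j\bar k}+f_{,j\bar k}$, this is precisely a uniform upper bound on the complex Hessian $f_{,j\bar k}$, which together with the lower bound of \cite[Lemma 6.1]{VZ} gives the statement. Thus the whole problem reduces to bounding $\gamma_f$ uniformly on $[0,+\infty)$.

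By Proposition \ref{secondorder}, under the hypothesis $\frac{c}{2}d\eta-(n-1)\chi>0$ one has $\gamma_f\leq Ce^{A(f-\inf_{M\times[0,t]}f)}$, so it is enough to bound $f-\inf_{M\times[0,t]}f$, i.e.\ the oscillation of $f$, uniformly in $t$. I would do this by contradiction, following the scheme announced after Proposition \ref{estimates}: if $\inf_M f$ were not uniformly bounded below, there would be $t_i\to+\infty$ with $\inf_M f(\cdot,t_i)\to-\infty$. Setting $\psi_i=f(\cdot,t_i)-\sup_M f(\cdot,t_i)$, hypothesis ($i$) of Proposition \ref{normac0}, namely $\sup_M\psi_i=0$, holds because $\sup_M f\geq 0$ by Proposition \ref{estimates}. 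Hypothesis ($ii$) is exactly Proposition \ref{secondorder}: the quantities $\gamma_{\psi_i}=\gamma_f(\cdot,t_i)$ and the exponent $\psi_i-\inf\psi_i$ are both invariant under an additive constant, so $\gamma_{\psi_i}\leq Ce^{A(\psi_i-\inf_{M\times[0,t_i]}\psi_i)}$ with the same constants $A,C$.

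Proposition \ref{normac0} then gives $\|e^{-B\psi_i}\|_{C^0}\leq C'$, $B=A/(4-\epsilon)$, with $C'$ depending only on $A$, $C$ and the initial data. Because $\psi_i\leq 0$ attains its maximum value $0$, the supremum of $e^{-B\psi_i}$ is reached where $f(\cdot,t_i)$ is smallest, so $\|e^{-B\psi_i}\|_{C^0}=e^{-B\inf_M\psi_i}=e^{B\,\mathrm{osc}_M f(\cdot,t_i)}$; since $\sup_M f(\cdot,t_i)\geq 0$, the assumption $\inf_M f(\cdot,t_i)\to-\infty$ forces $\mathrm{osc}_M f(\cdot,t_i)\to+\infty$ and hence $\|e^{-B\psi_i}\|_{C^0}\to+\infty$, contradicting the uniform bound $C'$. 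Therefore $\inf_M f\geq -K$ for a constant $K$ depending only on the initial data. Plugging this into Proposition \ref{estimates} yields $\sup_M f\leq C_0+C_1K$, so $f$ is uniformly bounded, $\inf_{M\times[0,t]}f\geq -K$ for every $t$, and Proposition \ref{secondorder} gives $\gamma_f\leq Ce^{A(C_0+C_1K+K)}$ uniformly; by the first paragraph this bounds the second derivatives from above.

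I expect the real difficulty to lie entirely inside Proposition \ref{normac0}, on which the above rests: one must be sure that the constant $C'$ produced by the Moser iteration is genuinely independent of $i$, i.e.\ that both the iteration constants and the integral bound of Lemma \ref{boundint} (the Sasakian $\alpha$-invariant estimate) depend only on $A$, $C$ and the background geometry, and not on $\inf_M f(\cdot,t_i)$. Only then does the blow-up of the oscillation contradict $C'$. A secondary, purely bookkeeping point is that the space-time infimum appearing in Proposition \ref{secondorder} must be made compatible with the constant shift defining $\psi_i$, which is what allows hypothesis ($ii$) to be read off verbatim from the second-order estimate.
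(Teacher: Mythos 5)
Your proposal is correct and takes essentially the same route as the paper: the contradiction scheme sketched before Proposition \ref{normac0} (with Proposition \ref{estimates} giving $\sup_M\psi_i=0$ and Proposition \ref{secondorder} giving hypothesis ($ii$)), followed by Propositions \ref{secondorder} and \ref{estimates} to convert the resulting bound on $\inf_M f$ into a uniform bound on $\gamma_f$ and hence on $f_{,j\bar k}$. If anything, your write-up is tidier than the paper's: you correct the sign slips in the paper's discussion (what is needed, and what you prove, is that $\inf_M f$ is bounded \emph{below}), and you obtain the contradiction directly from the uniform constant $C'$ in the statement of Proposition \ref{normac0} via $\|e^{-B\psi_i}\|_{C^0}=e^{B\,\mathrm{osc}_M f(\cdot,t_i)}\to\infty$, rather than from the ``$\|e^{-B\psi_i}\|_{C^0}<1$'' formulation the paper alludes to.
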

\begin{proof}
By Prop. \ref{normac0} and the discussion above, $\inf_Mf$ is uniformly bounded from above. The bound on the second order derivatives of $f$ follows then by Prop. \ref{secondorder} and Prop. \ref{estimates}.
\end{proof}

%
We are now in the position of proving Theorem \ref{main}.
\begin{proof}[Proof of Theorem \ref{main}]
By \cite[Th. 1.1]{VZ}, there exists a solution $f\!: M\times [0,+\infty)\rightarrow \mathds{R}$ to the Sasaki $J$-flow. By \cite[Lemma 6.1]{VZ} and by Corollary \ref{boundf} above, the second derivatives $\partial_j\partial_{\bar k}f$ are uniformly bounded. Since a solution to the Sasaki $J$-flow can be regarded as a solution to the K\"ahler $J$-flow on small open balls of $\mathds{C}^n$, by \cite[Th. 7.1]{VZ} we get uniform $C^\infty$ bounds on $f$. Then, by Ascoli-Arzel\`a Theorem, given a sequence $t_j\in [0,\infty)$, $t_j\rightarrow \infty$, there exists a subsequence $f_{t_j}$ converging in $C^\infty$-norm to a function $f_\infty$ as $t_j\rightarrow \infty$.

At this point, observe that $\dot f$ satisfies the heat equation $\partial_t\dot f=-\tilde \Delta \dot f$ and we have uniform bounds for $(g^T_f)_{j\bar k}$, $(\partial_t g^T_f)_{j\bar k}$, and all the covariant derivatives of $(g^T_f)_{j\bar k}$ and for $(\partial_t g^T_f)_{j\bar k}$.
Then we get uniform bounds also for the family $g_t$, $t\in[0,+\infty)$, of Riemannian metric on $M$ defined by:
$$
g_t(\cdot,\cdot)=g_f^T(\cdot,\cdot)+\eta(\cdot)\eta(\cdot),
$$
and for all its covariant derivatives. Thus we can apply the argument in \cite{ca} (Th. 2.1 and discussion below) to get:
$$
\sup_Mf-\inf_Mf\leq C_0e^{-C_1 t}.
$$
for some constant $C_0$ and $C_1$ independent of $t$. The convergence of $f$ in the $C^\infty$ topology follows by the same argument as in \cite[Sec. 5]{wein1}.
\end{proof}

\section{Mabuchi $K$-energy and the $J$-flow}
Let $\bar {\mathcal H}$ be the completion of $\mathcal H$ with respect to the $C^2_w$-norm. In \cite{guanzhangA}, P. Guan, X. Zhang proved that any two points in $\mathcal H$ can be connected by a $C^{1,1}$--geodesic. By definition, a $C^{1,1}$-geodesic is a curve in $\bar {\mathcal H}$ obtained as weak limit of solutions to:
$$
\left(\ddot f-\frac14 |d_B \dot f|^2_f\right)\,\eta\wedge (d\eta_f)^n=\epsilon\,\eta\wedge (d\eta)^n\,.
$$
This result allows us to prove the following (cfr. \cite[Prop. 3]{chenM}).
\begin{prop}\label{jbound}
If there exists a critical metric then the functional $J_\chi\!:\mathcal H_0\rightarrow \mathds R$ is uniformly bounded from below.
\end{prop}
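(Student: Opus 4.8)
The plan is to realise the critical metric $h_0$ as a global minimiser of $J_\chi$ on $\mathcal H_0$, so that $J_\chi\geq J_\chi(h_0)>-\infty$. The engine is the convexity of $J_\chi$ along the $C^{1,1}$--geodesics furnished by Guan--Zhang \cite{guanzhangA}: I will show that $t\mapsto J_\chi(f(t))$ is convex along every such geodesic and that its slope vanishes at $h_0$. Adding a constant to $h_0$ changes neither $\sigma_{h_0}$ nor membership in $\mathcal H$, so we may assume $I(h_0)=0$, i.e. $h_0\in\mathcal H_0$.

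First I would compute the second variation. Differentiating $(\partial_tJ_\chi)(f)=\frac{1}{2^{n-1}(n-1)!}\int_M\dot f\,\chi\wedge\eta\wedge(d\eta_f)^{n-1}$, using $\partial_t(d\eta_f)=dd^c\dot f$, and integrating by parts the term carrying $dd^c\dot f$ (legitimate since $\chi$ and $d\eta_f$ are closed basic forms), yields a formula of the shape
\[
\partial_t^2J_\chi=\frac{1}{2^{n-1}(n-1)!}\int_M\Big(\ddot f\,\chi\wedge\eta\wedge(d\eta_f)^{n-1}-(n-1)\,i\partial_B\dot f\wedge\bar\partial_B\dot f\wedge\chi\wedge\eta\wedge(d\eta_f)^{n-2}\Big).
\]
Along a geodesic the defining equation forces $\ddot f=\tfrac14|d_B\dot f|^2_f$, equivalently $\ddot f\,\eta\wedge(d\eta_f)^n=n\,i\partial_B\dot f\wedge\bar\partial_B\dot f\wedge\eta\wedge(d\eta_f)^{n-1}$. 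Substituting this and evaluating both integrands in foliated coordinates in which $g_f^T$ is the identity and $\chi=\mathrm{diag}(\chi_1,\dots,\chi_n)$, a pointwise computation makes the two contributions collapse to a positive dimensional multiple of $\int_M\big(\sum_j\chi_j\,|\partial_j\dot f|^2\big)\,\eta\wedge(d\eta_f)^n$; positivity of $\chi$ gives $\chi_j>0$, so $\partial_t^2J_\chi\geq0$ and $J_\chi$ is geodesically convex.

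Next I would record two bookkeeping facts. The functional $I$ is affine along geodesics, because the same integration by parts gives $\partial_t^2I=\int_M(\ddot f-\tfrac14|d_B\dot f|^2_f)\,\eta\wedge(d\eta_f)^n$, which vanishes on the geodesic; hence the geodesic joining two points of $\mathcal H_0$ stays in $\mathcal H_0$, and its velocity always lies in $T\mathcal H_0=\{k:\int_M k\,\eta\wedge(d\eta_f)^n=0\}$. Moreover, at a critical metric $2n\,\eta\wedge\chi\wedge(d\eta_{h_0})^{n-1}=c\,\eta\wedge(d\eta_{h_0})^n$, so the first variation of $J_\chi$ at $h_0$ in a direction $k$ equals $\frac{c}{2^n n!}\int_M k\,\eta\wedge(d\eta_{h_0})^n$, which is zero for $k\in T_{h_0}\mathcal H_0$. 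Given an arbitrary $h\in\mathcal H_0$, take the geodesic $f(t)$, $t\in[0,1]$, from $h_0$ to $h$; then $t\mapsto J_\chi(f(t))$ is convex with vanishing derivative at $t=0$, so $J_\chi(h)=J_\chi(f(1))\geq J_\chi(f(0))=J_\chi(h_0)$, and the lower bound follows.

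The main obstacle is regularity: the Guan--Zhang geodesics are only $C^{1,1}$, so $\ddot f$ exists merely in $L^\infty$ and the variational identities above are a priori formal. I would make them rigorous on the smooth $\epsilon$--approximations solving the defining equation with right--hand side $\epsilon\,\eta\wedge(d\eta)^n$. Along these the geodesic equation holds up to an $O(\epsilon)$ error, so the cancellation degrades to $\partial_t^2J_\chi\geq-C\epsilon$ and the initial slope becomes $O(\epsilon)$, with $C$ controlled by the uniform $C^{1,1}$--bounds of \cite{guanzhangA}; passing to the limit $\epsilon\to0$ with those bounds recovers convexity together with the vanishing initial slope, exactly as in \cite[Prop. 3]{chenM}. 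This approximation step, rather than the pointwise algebraic identity, is the delicate part of the argument.
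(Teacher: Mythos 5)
Your proposal is correct and follows essentially the same route as the paper: connect the critical point to an arbitrary $f_1\in\mathcal H_0$ by a Guan--Zhang $C^{1,1}$-geodesic, invoke convexity of $J_\chi$ along such geodesics together with the vanishing of $\partial_t J_\chi$ at the critical endpoint, and conclude $J_\chi(f_1)\geq J_\chi(f_\infty)$. The only difference is one of detail rather than of method: the paper outsources the geodesic convexity (including the passage from smooth $\epsilon$-approximations to the $C^{1,1}$ limit) to the estimates in the proof of Prop.~3.2 of \cite{VZ}, whereas you rederive the second-variation formula and the approximation argument yourself, and you also make explicit the bookkeeping that $I$ is affine along geodesics so that they stay in $\mathcal H_0$ --- a point the paper leaves implicit.
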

\begin{proof}
Let $f_\infty$ be a critical point in $\mathcal H_0$, $f_1\in \mathcal H_0$ and let $f\!:[0,1]\rightarrow \bar{\mathcal H}$ be a $C^{1,1}$-geodesic such that $f(0)=f_\infty$, $f(1)=f_1$. Then by the estimates in the proof of Prop. 3.2 in \cite{VZ}, $J_\chi$ satisfies:
$$
\partial^2_t J_\chi(f)\geq 0.
$$
Further, being $f_\infty$ a critical point, $\partial_tJ_\chi(f)|_{t=0}=0$. Thus, we have $J_\chi(f_1)\geq J_\chi(f_\infty)$ for any $f_1\in \mathcal H_0$.
\end{proof}
Let $\alpha$ be a transverse K\"ahler form on $M$ and denote by $[\alpha]_B$ the basic $(1,1)$ class associated to $\alpha$.
Define:
$$
\mathcal K=\{ \textrm{transverse K\"ahler form in the basic } (1,1) \textrm{ class } [d\eta]_B\},
$$
and observe that $\mathcal H_0\simeq \mathcal K$. Further, denote by $s^T_f$ the transverse scalar curvature associated to $d\eta_f$, namely in local coordinates $s^T_f=(g_f^T)^{\bar kj}{\rm Ric}(d\eta_f)_{j\bar k}$, where ${\rm Ric}(d\eta_f)$ is the Ricci tensor associated to $d\eta_f$. Let us also denote by $\rho^T$ the transverse Ricci form associated to ${\rm Ric}(d\eta)$ and by $\bar s^T$ the average scalar curvature defined by:
$$
\bar s^T=\frac{2n\int_M \rho^T \wedge \eta\wedge (d\eta)^{n-1}}{\int_M \eta\wedge (d\eta)^n}\,. 
$$
The Mabuchi $K$-energy in the Sasakian context has been introduced by A. Futaki, H. Ono and G. Wang in \cite{F}. According to the notation in \cite{Z}, it is defined as follows.
Let $f_0,f_1\in  \mathcal{H}$ and $f\colon [0,1]\to\mathcal{H}
$ be a smooth path satisfying $f(0)=f_0$, $f(1)=f_1$. Then the functional $\mathcal M\!:\mathcal H\times \mathcal H\rightarrow R$ defined by:
$$
\mathcal M(f_0,f_1):=\int_0^1\int_M \dot f\,(s^T_f-\bar s^T)\wedge \eta\wedge (d\eta_f)^{n}\,dt,
$$
is well defined and factors through $\mathcal H_0\times H_0$ (see e.g. \cite[Lemma 3.2]{Z}). Define the $K$-energy map of the transverse K\"ahler class $[d\eta]_B$ by $
\mathcal M\!:\mathcal K\rightarrow R,$ $\mathcal M(d\eta_f)=\mathcal M(d\eta,d\eta_f).
$
Further, the map
$
\mathcal M\!:\mathcal H\rightarrow R,$ $\mathcal M(f)=\mathcal M(0,f),
$
is called the $K$-energy map of $\mathcal H$.

We can prove now our second result, which should be compared with \cite{chenM, wein1, wein2}.
\begin{theorem}\label{second}
Let $(M,\xi,\Phi,\eta,g)$ be a Sasakian manifold and assume that $-\rho^T$ is a positive transverse K\"ahler form. If:
\begin{equation}\label{hypothesis}
\frac {\bar s^T}2[d\eta]_B+(n-1)[\rho^T]_B>0,
\end{equation}
then the Mabuchi $K$-energy is bounded below on $[d\eta]_B$.
\end{theorem}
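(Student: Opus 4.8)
The plan is to recognise the Mabuchi $K$-energy, up to a nonnegative entropy term, as the functional $J_\chi$ attached to the distinguished transverse K\"ahler form $\chi=-\rho^T$, and then to feed this into Theorem \ref{main} and Proposition \ref{jbound}. First I would fix $\chi=-\rho^T$. By the hypothesis $-\rho^T>0$ this is a positive basic form of type $(1,1)$, and it is closed since $\rho^T$ is, so it is an admissible transverse K\"ahler form for the flow \eqref{jflow}. For this choice the constant of \eqref{jflow} is $c=\frac{2n\int_M(-\rho^T)\wedge\eta\wedge(d\eta)^{n-1}}{\int_M\eta\wedge(d\eta)^n}=-\bar s^T$, and substituting it into the requirement of Theorem \ref{main}, namely that $\frac c2 d\eta-(n-1)\chi$ be a transverse K\"ahler form, one recovers the positivity condition \eqref{hypothesis}. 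Hence Theorem \ref{main} applies to $\chi$, the $J$-flow converges to a critical metric in $\mathcal H_0$, and Proposition \ref{jbound} then yields that $J_\chi$ is uniformly bounded below on $\mathcal H_0$.

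The core of the proof is the decomposition relating $\mathcal M$ to $J_\chi$, a transverse (Sasakian) analogue of the Chen--Tian formula. I would differentiate $\mathcal M$ along a smooth path $f(t)$ in $\mathcal H_0$ and rewrite the scalar-curvature term by means of $s^T_f\,(d\eta_f)^n=n\,\rho^T_f\wedge(d\eta_f)^{n-1}$ together with $\rho^T_f=\rho^T-i\partial_B\bar\partial_B\log\frac{\det g^T_f}{\det g^T}$. After integrating by parts, the contribution of the fixed form $\rho^T$ reconstructs the first variation of $J_{\rho^T}$ (up to a positive constant), the $\bar s^T$ term reconstructs the first variation of $I$, and the surviving $i\partial_B\bar\partial_B\log$ term assembles into the first variation of the relative entropy $\mathcal E(f)=\int_M\log\frac{(d\eta_f)^n}{(d\eta)^n}\,\eta\wedge(d\eta_f)^n$. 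Integrating in $t$ and restricting to $\mathcal H_0$, where $I\equiv 0$, and using the linearity $J_{\rho^T}=-J_{-\rho^T}=-J_\chi$, I expect the clean identity $\mathcal M(f)=\mathcal E(f)+\kappa\,J_\chi(f)$ on $\mathcal H_0$ for some constant $\kappa>0$.

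To finish, I would observe that $\mathcal E(f)\ge 0$: since every $f\in\mathcal H_0$ keeps the basic class fixed, $\int_M\eta\wedge(d\eta_f)^n=\int_M\eta\wedge(d\eta)^n$, so $\mathcal E$ is the relative entropy of two measures of equal total mass and is nonnegative by Jensen's inequality. Combining this with the lower bound for $J_\chi$ gives $\mathcal M(f)\ge\kappa\,J_\chi(f)\ge\kappa\inf_{\mathcal H_0}J_\chi>-\infty$ for all $f\in\mathcal H_0\simeq\mathcal K$, which is the assertion.

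The step I expect to be delicate is the decomposition itself: one has to track every sign so that the entropy enters with a nonnegative coefficient and $J_\chi$ with a positive one, and to confirm that the class-level positivity \eqref{hypothesis} really delivers the form-level positivity of $\frac c2 d\eta-(n-1)\chi$ that Theorem \ref{main} demands. By contrast, the convexity that forces $J_\chi$ to be bounded below is already isolated in Proposition \ref{jbound}, so no further geodesic analysis is needed here.
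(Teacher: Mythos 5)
Your proposal is correct and takes essentially the same route as the paper: set $\chi=-\rho^T$, deduce the lower bound for $J_{-\rho^T}$ from Theorem \ref{main} combined with Proposition \ref{jbound}, and conclude through the Chen--Tian type decomposition of the $K$-energy into $\frac{\bar s^T}{n+1}I(f)+2J_{-\rho^T}(f)$ plus an entropy term --- a formula the paper simply quotes from \cite[Prop. 3.2]{Z} rather than re-deriving as you sketch. The only variation is the final step, where you bound the entropy term below by $0$ via Jensen's inequality (using equality of total masses, $\int_M\eta\wedge(d\eta_f)^n=\int_M\eta\wedge(d\eta)^n$), whereas the paper uses the pointwise inequality $x\ln x>-e^{-1}$ to get the weaker but equally sufficient bound $-e^{-1}\int_M\eta\wedge(d\eta)^n$; both suffice.
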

\begin{proof}
Define a $J_{\chi}$ functional with $\chi=-\rho^T$. By Prop. \ref{jbound} and Theorem \ref{main}, condition \eqref{hypothesis} implies that this functional is bounded from below. Conclusion follows by observing that the Sasakian version of the Mabuchi $K$-energy map can be written as (see \cite[Prop. 3.2]{Z} ):
$$
\mathcal M(f)=\frac{\bar s^T}{n+1}I(f)+2J_{-\rho^T}(f)+2\int_M\ln\left(\frac{\eta\wedge (d\eta_f)^n}{\eta\wedge (d\eta)^n}\right)\eta\wedge (d\eta_f)^n,
$$
where from $f\in \mathcal H_0$ follows $I(f)=0$, and since $x\ln x>-e^{-1}$ for any $x>0$, the term $\int_M\ln\left(\frac{\eta\wedge (d\eta_f)^n}{\eta\wedge (d\eta)^n}\right)\eta\wedge (d\eta_f)^n$ is bounded below by $-e^{-1}\int_M\eta\wedge (d\eta)^n$.
\end{proof}


\begin{thebibliography}{48}
\bibitem{BHV} \textsc{L. Bedulli, W. He, L. Vezzoni}, Second order geometric flows on foliated manifolds, preprint 2015, arXiv:1505.03258v1 [math.DG].
\bibitem{bgbook} \textsc{C.P. Boyer, K. Galicki}, {\em Sasaki geometry}, Oxford Mathematical Monographs. Oxford University Press, Oxford, 2008. xii+613 pp.

\bibitem{cpz} \textsc{S. Calamai, D. Petrecca, K. Zheng}, The geodesic problem for the Dirichlet metric and the Ebin metric on the space of Sasakian metrics, preprint 2015, arXiv:1405.1211 [math.DG].

\bibitem{ca} \textsc{H-D. Cao}, Deformation of K\"ahler metrics to K\"ahler-Einstein metrics on compact K\"ahler manifolds, {\em Invent. Math.} {\bf81} (1985), 359--372.

\bibitem{chenM} \textsc{X. X. Chen}, On lower bound of the Mabuchi energy and its application. {\em Int. Math. Research Notices} {\bf 12} (2000).

\bibitem{chen} \textsc{X. X. Chen}, A new parabolic flow in K\"ahler manifolds, {\em Comm. An. Geom. } {\bf 12} (2004), n. 4, 837--852.

\bibitem{donaldsonNC} \textsc{S. K. Donaldson}, {\em Symmetric spaces, K\"ahler geometry and Hamiltonian dynamics}, in Northern California
Symplectic Geometry Seminar, Amer. Math. Soc. Transl. Ser. 2 196, Amer. Math. Soc., Providence, 1999, 13--33.

\bibitem{donaldsonMM} \textsc{S. K. Donaldson}, Moment maps and diffeomorphisms, \emph{Asian J. Math.} {\bf 3} (1999), no. 1, 1--15. 

\bibitem{F} \textsc{A. Futaki, H. Ono, G. Wang}, Transverse K\"ahler geometry of Sasaki manifolds and toric Sasaki-Einstein manifolds, {\em J. Diff. Geom.} {\bf83} (2009), 585--636.

\bibitem{guanzhang} \textsc{P. Guan, X. Zhang}, {\em A geodesic equation in the space of Sasakian metrics}, Geometry and analysis 1, 303--318,  Adv. Lect. Math. 17, Int. Press, Somerville, MA, 2011. 

\bibitem{guanzhangA} \textsc{P. Guan, X. Zhang}, Regularity of the geodesic equation in the space of Sasakian metrics, {\em Advances in Math.} {\bf230}, Issue 1 (2012), pp. 321--371.

\bibitem{hebey} E. Hebey, {\em Nonlinear Analysis on Manifolds: Sobolev Spaces and Inequalities}, AMS, Courant Ins. of Math. Sci. (2000).

\bibitem{he} \textsc{W. He}, On the transverse scalar curvature of a compact Sasaki manifold, {\tt  arXiv:1105.4000}, to appear in {\em Complex Manifolds}.  

\bibitem{Z} \textsc{X. Jin, X. Zhang}, Uniqueness of Constant Scalar Curvature Sasakian Metrics, preprint 2015, arXiv:1509.06522v2 [math.DG].

\bibitem{LSS}
\textsc{M. Lejmi, G, Sz\'ekelyhidi}, The J-flow and stability, {\em Adv. Math.} {\bf 274}, (2015), 404--431.

\bibitem{mabuchi} \textsc{T. Mabuchi}, Some symplectic geometry on compact K\"ahler manifolds. I,  {\em Osaka J. Math.} {\bf24} (1987), no. 2, 227--252. 

\bibitem{nitta} \textsc{Y. Nitta, K. Sekiya}, Uniqueness of Sasaki--Einstein metrics, {\em Tohoku Math. J.} (2)
{\bf 64}, n. 3 (2012), 453--468.

\bibitem{smoczyk} \textsc{K. Smoczyk, G. Wang, Y. Zhang}, The Sasaki-Ricci flow, {\em Internat. J. Math.} {\bf21} (2010), no. 7, 951--969. 


\bibitem{songweinkoveJ} \textsc{J. Song, B. Weinkove}, On the convergence and singularities of the $J$-flow with applications to the Mabuchi energy, preprint 2014.

\bibitem{sparks} \textsc{J. Sparks}, Sasakian-Einstein manifolds, {\em Surveys Diff.Geom.} {\bf 16} (2011), 265--324.  
\bibitem{tian} \textsc{G. Tian}, 
On K\"ahler-Einstein metrics on certain K\"ahler manifolds with $C_1(M)>0$. 
{\em Invent. Math.} {\bf89} (1987), no. 2, 225--246.

\bibitem{tosatti} \textsc{V. Tosatti, B. Weinkove}, Estimates for the complex Monge-Amp\`ere equation on Hermitian and balanced manifolds, {\em Asian J. Math.} {\bf14} (2010), no. 1, 19--40. 

\bibitem{VZ} \textsc{L.\ Vezzoni, M.\ Zedda}, On the J-flow in Sasakian manifolds, to appear in {\em Ann. di Mat. Pura e Appl.}, 2015.

\bibitem{wein1} \textsc{B. Weinkove}, Convergence of the J-flow on K\"ahler surfaces, {\em Comm. Anal. Geom.} {\bf 12}, no. 4 (2004), 949-965.

\bibitem{wein2} \textsc{B. Weinkove}, On the J-flow in higher dimensions and the lower boundedness of the Mabuchi energy, {\em J. Differential Geom.} {\bf 73}, no. 2 (2006), 351--358.

\end{thebibliography}
\end{document}